\newtheorem{theorem}{Theorem}[section]
\newtheorem{prop}[theorem]{Proposition}
\newtheorem{lemma}[theorem]{Lemma}
\newtheorem{cor}[theorem]{Corollary}
\newtheorem{example}[theorem]{Example}
\theoremstyle{definition}
\newtheorem{defn}[theorem]{Definition}
\newtheorem{rem}[theorem]{Remark}
\begin{document}
\title{Dwork hypersurfaces of degree six and Greene's hypergeometric function}
\author{Satoshi Kumabe}
\date{}
\address{Mathematical Institute, Kyushu University, Motooka, Fukuoka 819-0395, Japan }
\email{kuma511ssk@gmail.com}
\maketitle
\begin{abstract}
In this paper, we give a formula for the number of rational points on the Dwork hypersurfaces of degree six over finite fields by using Greene's finite-field hypergeometric function, which is a generalization of Goodson's formula for the Dwork hypersurfaces of degree four.
Our formula is also a higher-dimensional and a finite field analogue of Matsumoto-Terasoma-Yamazaki's formula. Furthermore, we also explain the relation between our formula and Miyatani's formula. 
\end{abstract}
\footnote{2010 Mathematics Subject Classification. Primary 14G15, Secondary 11T24, \\
\keywords{Keywords and phrases. hypergeometric functions, Dwork hypersurfaces, the number of rational points.}}

\section{Introduction}

It is an interesting problem to express the number of rational points on certain varieties over finite fields by using finite-field hypergeometric functions. Finite-field hypergeometric functions were introduced independently by Greene \cite{Greene}, Katz \cite{katz}, Koblitz \cite{Koblitz} and McCarthy \cite{transformation}. For example, in \cite{McCarthy F_p point}, McCarthy gave a formula for the Dwork hypersurfaces over finite fields by using his hypergeometric functions. In \cite{Salerno}, Salerno gave a formula for diagonal hypersurfaces, which are generalizations of the Dwork hypersurfaces, by using Katz's hypergeometric functions. 

In {\cite[Theorem 1.1]{case4}}, Goodson gave a formula for the number of rational points on the Dwork hypersurfaces of degree four over finite fields by using Greene's hypergeometric functions and Jacobi sums. Furthermore, in {\cite[Theorem 1.2]{cased}}, she also gave a similar formula in case of odd degree by Greene's hypergeometric functions and Gauss sums. The purpose of this paper is to extend Goodson's result to the Dwork hypersurfaces of degree six. We give the formula by using Greene's hypergeometric functions and Jacobi sums. {\cite[Theorem 1.1]{case4}} and our formula are higher-dimensional and finite field analogues of the formula of Matsumoto-Terasoma-Yamazaki \cite[Theorem 1]{MTY}, for the complex periods of a Hesse cubic curve, that is, the Dwork hypersurfaces of degree three. $($For more details, see Remark \ref{analogy}.$)$ 

Now, we explain our formula precisely. First, we recall Goodson's results. Let $\mathbb{F}_q$ be the finite field with $q=p^e$ elements, where $p$ is a prime number. 
Let $d$ be a positive integer. For $\lambda \in \mathbb{F}_q$, we define the Dwork hypersurface $X_{\lambda}^d$ by the projective equation
\begin{equation*}
x_1^d+x_2^d+\cdots +x_d^d=d\lambda x_1x_2\cdots x_d
\end{equation*}
over $\mathbb{F}_q$. 
Let $\widehat{\mathbb{F}}_q^\times$ be the group of characters on $\mathbb{F}_q^\times$ in $\mathbb{C}^{\times}$ and $\omega$ a generator of $\widehat{\mathbb{F}}_q^{\times}$ which we fix throughout this paper. For a character $\chi \in \widehat{\mathbb{F}}^{\times}_q$, we extend it by putting $\chi(0)=0$. We define the trivial character $\epsilon \in \widehat{\mathbb{F}}_q^\times$ by putting $\epsilon(x)=1$ for any $x\in \mathbb{F}_q^\times$ and extend it by putting $\epsilon(0)=0$.
Then for $\omega$ $\in \widehat{\mathbb{F}}_q^\times$, we define the Gauss sum $g(\omega)$ by

\begin{align*}
g(\omega):=\sum_{x\in \mathbb{F}_q}\omega(x)\mathrm{exp}\left(\frac{2\pi \sqrt{-1}\cdot \mathrm{tr}(x)}{p}\right),
\end{align*}
\normalsize
where $\mathrm{tr}$ is the trace map from $\mathbb{F}_q$ to $\mathbb{F}_p$.
Note that we obtain $g(\epsilon)=-1$ from $\epsilon(0)=0$.
Furthermore, for characters $\chi, \psi \in \widehat{\mathbb{F}}_q^{\times}$, we define the Jacobi sum by

\begin{align*}
J(\chi, \psi):=\sum_{x\in \mathbb{F}_q}\chi(x)\psi(1-x)=\sum_{x+y=1}\chi(x)\psi(y).
\end{align*}
\normalsize
More generally, for characters $\chi_1, \chi_2, \ldots , \chi_n\in \widehat{\mathbb{F}}_q^\times$, we define the Jacobi sum by

\begin{align*}
J(\chi_1, \chi_2, \ldots , \chi_n)
=\sum_{\substack{x_1, \ldots x_n\in \mathbb{F}_q\\x_1+\cdots +x_n=1}}\chi_1(x_1)\cdots \chi_n(x_n).
\end{align*}
\normalsize
Next, we define Greene's hypergeometric function. For $A,B \in \widehat{\mathbb{F}}_q^{\times}$, we define the normalized Jacobi sum by 

\begin{align*}
\binom{A}{B}:= \frac{B(-1)}{q}\sum_{x\in \mathbb{F}_q}A(x)\overline{B}(1-x) =\frac{B(-1)}{q}J(A,\overline{B}),
\end{align*}
\normalsize
where $\overline{B}$ is the complex conjugate of $B$. 
Then for $n\geq 1$, $A_0,A_1, \ldots ,A_n, \\
B_1,B_2,\ldots ,B_n \in \widehat{\mathbb{F}}_q^{\times}$ and $x \in \mathbb{F}_q$, we define Greene's hypergeometric function ${}_{n+1}F_n$ by
\scriptsize
\[
{}_{n+1}F_n \left(
\begin{array}{cccc|c}
A_0, & A_1, & \ldots & A_n \\
& B_1, & \ldots & B_n
\end{array}
\ x \right)
_q := \begin{cases}
\displaystyle
\frac{q}{q-1}\sum_{\chi\in \widehat{\mathbb{F}}_q^\times} \binom{A_0\chi}{\chi} \binom{A_1\chi}{B_1\chi} \cdots \binom{A_n\chi}{B_n\chi} \chi (x) & (n\geq 2) \\
\\
\vspace{1mm} \\
\displaystyle
\epsilon (x)\frac{A_1B_1(-1)}{q}\sum_{y\in \mathbb{F}_q}A_1(y)\overline{A_1}B_1(1-y)\overline{A_0}(1-xy) & (n=1).
\end{cases}
\]
\normalsize
Then, Goodson obtained the following results.
\begin{theorem}[{\cite[Theorem 1.1]{case4}}]\label{case4}
Let $q=p^e$ be a power of a prime number such that $q$ is congruent to $1$ modulo $4$ and $\omega$ a generator of $\widehat{\mathbb{F}}^\times_q$. We put $t=(q-1)/4$.
For $\lambda\in \mathbb{F}_q$ with $\lambda \neq 0$ and $\lambda^4\neq 1$, we have

\begin{equation*}
\begin{split}
\#X_{\lambda}^4(\mathbb{F}_q)&=\frac{q^3-1}{q-1}+12q\omega^t(-1)\omega^{2t}(1-\lambda^4) \\
&\vspace{5mm} \\
&+q^2 \cdot{}_3F_2
\left(
\begin{array}{ccc|c}
\omega^t & \omega^{2t} & \omega^{3t} \\
& \epsilon & \epsilon
\end{array}
\ \frac{1}{\lambda^4} \right)
_q+3q^2\binom{\omega^{3t}}{\omega^t} \ {}_2F_1
\left(
\begin{array}{cc|c}
\omega^{3t} & \omega^{t} \\
& \omega^{2t}
\end{array}
\ \frac{1}{\lambda^4} \right)
_q.
\end{split}
\end{equation*}
\normalsize
\end{theorem}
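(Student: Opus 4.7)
The plan is to count the affine solutions $N_\lambda$ of $x_1^4+x_2^4+x_3^4+x_4^4=4\lambda x_1 x_2 x_3 x_4$ in $\mathbb{F}_q^4$ and then pass to the projective count via $\#X_\lambda^4(\mathbb{F}_q)=(N_\lambda-1)/(q-1)$. I would begin by expressing the vanishing of $F(\vec x):=\sum_i x_i^4-4\lambda\prod_i x_i$ through additive-character orthogonality,
\[
N_\lambda=\frac{1}{q}\sum_{t\in\mathbb{F}_q}\sum_{\vec x\in\mathbb{F}_q^4}\theta\bigl(tF(\vec x)\bigr),\qquad \theta(y)=\exp\!\left(\tfrac{2\pi\sqrt{-1}\,\mathrm{tr}(y)}{p}\right),
\]
so that the $t=0$ contribution is $q^3$ and only the terms with $t\in\mathbb{F}_q^\times$ remain to analyse.

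For fixed $t\ne 0$ the inner sum factors as $\theta(-4\lambda t\prod_i x_i)\prod_{i=1}^{4}\theta(tx_i^4)$. I would convert each univariate factor to Gauss sums through the standard identity
\[
\sum_{x\in\mathbb{F}_q}\theta(tx^4)\;=\;\sum_{j=1}^{3}\overline{\omega^{jt_0}}(t)\,g(\omega^{jt_0}),\qquad t_0=(q-1)/4,
\]
which follows from $\#\{x\in\mathbb{F}_q^\times:x^4=a\}=\sum_{j=0}^{3}\omega^{jt_0}(a)$ combined with the definition of the Gauss sum and the cancellation $g(\epsilon)=-1$. For the coupling factor $\theta(-4\lambda t\prod x_i)$ I would invoke the additive-multiplicative duality $\theta(y)=\frac{1}{q-1}\sum_{\chi}\overline{\chi}(y)g(\chi)$ (valid for $y\ne 0$, with the loci where some $x_i$ vanish treated separately), which introduces a further character $\chi$ on $\lambda\prod_i x_i$. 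After swapping sums and running orthogonality in $t$, the dependence on $t$ collapses into a linear congruence among the exponents of $\omega$, and what survives is a finite sum indexed by quadruples $(j_1,j_2,j_3,j_4)\in(\mathbb{Z}/4\mathbb{Z})^4$ and $\chi\in\widehat{\mathbb{F}}_q^\times$ whose terms are products of four Gauss sums multiplied by a character evaluated at $1/\lambda^4$.

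The last step is to partition the surviving multi-indices. Tuples $(j_1,j_2,j_3,j_4)=(0,0,0,0)$ recover the constant $(q^3-1)/(q-1)$. Tuples whose coordinates are a permutation of $(0,1,2,3)$ assemble, after converting products of Gauss sums to normalized Jacobi sums via $J(\chi,\psi)=g(\chi)g(\psi)/g(\chi\psi)$ and $g(\chi)g(\bar\chi)=\chi(-1)q$, into the $_3F_2$ term of the formula; the prefactor $q^2$ comes from the overall $1/q$ in the outer sum balanced against a $q^3$ produced by these Gauss-sum reductions. Tuples with a double coincidence of shape $(a,a,b,b)$ yield the $_2F_1$ term, the combinatorial multiplicity $3$ accounting for the three unordered pairings $\{\{1,2\},\{3,4\}\},\{\{1,3\},\{2,4\}\},\{\{1,4\},\{2,3\}\}$; tuples of shape $(a,a,a,a)$ and the remaining semi-degenerate configurations combine, via $J(\chi,\bar\chi)=-\chi(-1)$ for $\chi\ne\epsilon$, to produce the correction $12q\,\omega^{t_0}(-1)\omega^{2t_0}(1-\lambda^4)$. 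The main obstacle is not any single manipulation but the combinatorial bookkeeping: ensuring that every one of the $4^4$ multi-indices is assigned to exactly one piece with the correct sign and multiplicity, correctly matching the $\lambda$-dependent argument $1/\lambda^4$, and handling separately the loci where some $x_i$ vanish (the smoothness hypotheses $\lambda\ne 0$ and $\lambda^4\ne 1$ keep these error contributions in check). Once this classification is in place, the identification with Greene's $\binom{A}{B}$ symbols and the definitions of ${}_3F_2$ and ${}_2F_1$ becomes a direct verification.
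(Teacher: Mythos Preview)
Your additive-character approach amounts to a from-scratch derivation of Koblitz's point-count formula, which is precisely the tool Goodson uses in \cite{case4} and the one this paper applies to the degree-six analogue in Section~\ref{proof by Koblitz's formula}. So the overall strategy matches.

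The genuine gap is in your bookkeeping of the multi-indices. After the coupling term is expanded via $\chi$ and orthogonality in $t$ is applied, the surviving constraint is $\sum_i j_i\equiv 0\pmod 4$: the index set is $W=\{\boldsymbol j\in(\mathbb{Z}/4\mathbb{Z})^4:\sum j_i=0\}$, of size $4^3$, not $4^4$. Permutations of $(0,1,2,3)$ have $\sum j_i\equiv 2$ and simply do not occur, so they cannot be the source of the ${}_3F_2$. The correct decomposition, after quotienting by the diagonal shift $\boldsymbol j\mapsto\boldsymbol j+(1,1,1,1)$ and the $S_4$-action, has exactly three pieces: the single class $[0,0,0,0]$ produces \emph{both} the constant $(q^3-1)/(q-1)$ \emph{and} the ${}_3F_2$ term (the hypergeometric sum arises from the remaining free summation over $\chi$, via the Hasse--Davenport relation as in Corollary~\ref{sekikousiki}); the three classes of shape $[0,0,2,2]$ give the ${}_2F_1$ with its coefficient $3$ (your $(a,a,b,b)$ intuition is correct here); and the twelve classes of shape $[0,0,1,3]$ collapse, via an identity of the type in Lemma~\ref{turai}, to $12q\,\omega^t(-1)\omega^{2t}(1-\lambda^4)$. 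There is also a secondary slip in the order of operations: the identity $\sum_x\theta(tx^4)=\sum_{j=1}^{3}\overline{\omega^{jt_0}}(t)g(\omega^{jt_0})$ cannot be applied before decoupling, since $\prod_i x_i$ still links the variables; after inserting $\chi$ the relevant univariate sum is $\sum_{x\ne0}\overline\chi(x)\theta(tx^4)$, which vanishes unless $\chi$ is a fourth power.
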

\begin{theorem}[{\cite[Theorem 1.4]{cased}}]\label{case5}
Let $q=p^e$ be a power of a prime number such that $q$ is congruent to $1$ modulo $5$ and $\omega$ a generator of $\widehat{\mathbb{F}}_q^{\times}$. We put $t=(q-1)/5$. For $\lambda\in \mathbb{F}_q$ with $\lambda \neq 0$ and $\lambda^5 \neq 1$, we have
\begin{equation*}
\begin{split}
\#X_{\lambda}^5(\mathbb{F}_q)&=\frac{q^4-1}{q-1} +q^3 \cdot{}_4F_3
\left(
\begin{array}{cccc|c}
\omega^t & \omega^{2t} & \omega^{3t} & \omega^{4t} \\
& \epsilon & \epsilon & \epsilon
\end{array}
\ \frac{1}{\lambda^5} \right)
_q \\
& \vspace{5mm} \\ 
&+20q^2\cdot{}_2F_1
\left(
\begin{array}{cc|c}
\omega^{2t} & \omega^{3t} \\
& \epsilon
\end{array}
\ \frac{1}{\lambda^5} \right)
_q +20q^2\cdot{}_2F_1
\left(
\begin{array}{cc|c}
\omega^{t} & \omega^{4t} \\
& \epsilon
\end{array}
\ \frac{1}{\lambda^5} \right)
_q \\
&\vspace{5mm} \\
&+ 30q^2\cdot{}_2F_1
\left(
\begin{array}{cc|c}
\omega^t & \omega^{3t} \\
& \omega^{4t}
\end{array}
\ \frac{1}{\lambda^5} \right)
_q +30q^2\cdot{}_2F_1
\left(
\begin{array}{cc|c}
\omega^{t} & \omega^{2t} \\
& \omega^{3t}
\end{array}
\ \frac{1}{\lambda^5} \right)
_q.
\end{split}
\end{equation*}
\end{theorem}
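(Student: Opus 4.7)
The plan is to count $\#X_\lambda^5(\mathbb{F}_q)$ via a character-sum expansion and recognize the resulting sums as values of Greene's hypergeometric function, in parallel with the proof of Theorem \ref{case4}. First, I would relate the projective count to an affine one by
\[
(q-1)\#X_\lambda^5(\mathbb{F}_q) = \#\{\mathbf{x}\in\mathbb{F}_q^5 : f_\lambda(\mathbf{x})=0\}-1,
\]
where $f_\lambda = x_1^5+\cdots+x_5^5-5\lambda x_1\cdots x_5$, and then apply the orthogonality identity $\sum_{y\in\mathbb{F}_q}\psi(yz)=q\cdot\delta_{z,0}$ (for the nontrivial additive character $\psi$ used to define $g$) to encode the indicator of $\{f_\lambda=0\}$ as a Fourier sum over $y\in\mathbb{F}_q$. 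The $y=0$ contribution is $q^4$, which yields the leading $\tfrac{q^4-1}{q-1}$ term after projective normalization.

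For $y\neq 0$ the sum factors coordinate-wise, so I would expand each $\sum_x \psi(y x^5)$ and the mixed term $\sum_{\mathbf{x}}\psi(-5\lambda y\, x_1\cdots x_5)$ by inverse multiplicative Fourier transform, introducing Gauss sums. Only characters $\chi$ with $\chi^5=\epsilon$ survive in the fifth-power sums; since $q\equiv 1\pmod 5$, these are exactly the $\omega^{jt}$ for $0\le j\le 4$. What remains is a weighted sum over tuples $(\chi_1,\dots,\chi_5)\in\{\omega^{jt}\}^5$ subject to $\chi_1\cdots\chi_5=\epsilon$, with weight $\prod_i g(\chi_i)$ and a character evaluation at $1/\lambda^5$.

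Next, I would group these tuples by $S_5$-orbit, classifying them by the partition of $5$ describing the multiplicities. The partitions $(4,1)$ and $(3,2)$ force two of the exponents to coincide modulo $5$ and thus collapse into the constant $(5)$ orbits, which feed back into the lower-order contributions. The surviving orbit types are: $(1,1,1,1,1)$ (a single orbit of size $120$), producing the main ${}_4F_3$ term; $(3,1,1)$, producing the two $20q^2\cdot{}_2F_1$ terms with the orbit size $20=5!/(3!1!1!)$ matching the coefficient; and $(2,2,1)$, producing the two $30q^2\cdot{}_2F_1$ terms with $30=5!/(2!2!1!)$. Each surviving inner sum is then translated into Greene's notation by matching it to the definition of ${}_{n+1}F_n$, using $g(\chi)g(\overline{\chi})=\chi(-1)q$ and $J(\chi,\psi)=g(\chi)g(\psi)/g(\chi\psi)$ (for $\chi\psi\neq\epsilon$) to convert Gauss-sum products into the binomial symbols $\binom{A}{B}$.

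The main obstacle is the combinatorial bookkeeping combined with Hasse--Davenport-type identities (relating $g(\chi^5)$ to $\prod_{j=0}^{4} g(\chi\omega^{jt})$) needed to reduce the ``raw'' tuple sum to Greene's hypergeometric normalization with the correct powers of $q$ and the correct coefficients $1, 20, 30$. In particular, one must verify that those orbits of type $(3,1,1)$ or $(2,2,1)$ which do not appear explicitly in the theorem either vanish or fold into the displayed terms after these identities are applied, and one must carefully handle the $n=1$ special case in Greene's definition when producing the ${}_2F_1$ pieces. The hypotheses $\lambda\neq 0$ and $\lambda^5\neq 1$ ensure that $1/\lambda^5$ is a well-defined nonzero element of $\mathbb{F}_q$ and that one is away from the singular fibre of the Dwork family.
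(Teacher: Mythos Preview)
The paper does not itself prove this statement; it is quoted from Goodson's paper and serves as motivation. The template is the paper's proof of the degree-six analogue via Koblitz's formula (Section~\ref{proof by Koblitz's formula}), and the analogous argument for $d=5$ is what your proposal should be compared against.

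There is a genuine gap. Your claim that ``for $y\neq 0$ the sum factors coordinate-wise'' is false: while $\psi\bigl(y\sum_i x_i^5\bigr)=\prod_i\psi(yx_i^5)$ does split, the deformation term $\psi(-5\lambda y\,x_1\cdots x_5)$ couples all five coordinates and prevents $\sum_{\mathbf{x}}\psi\bigl(y f_\lambda(\mathbf{x})\bigr)$ from factoring. What you describe---a weighted sum over tuples $(\chi_1,\dots,\chi_5)$ of fifth-power characters with $\prod_i\chi_i=\epsilon$ and weight $\prod_i g(\chi_i)$---is exactly the point count of the \emph{diagonal} hypersurface $\sum x_i^5=0$, i.e.\ the first term $\sum_{\boldsymbol{w}\in W}N_q(0,\boldsymbol{w})$ in Koblitz's formula. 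The second term of Koblitz's formula carries an \emph{additional} sum over all characters (the index $j$ in Corollary~\ref{Kobformula}), and it is this extra sum, not the tuple $(\chi_1,\dots,\chi_5)$, that becomes the summation variable $\chi$ in Greene's ${}_{n+1}F_n$. Your outline never introduces this extra character, so as written it cannot produce any hypergeometric term at all.

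Relatedly, your orbit identification is inverted. In the Koblitz framework the sum runs over $W/{\sim}$, and the main ${}_4F_3$ comes from the class $[0,0,0,0,0]$ (your partition type $(5)$): the contribution $S_{[0,0,0,0,0]}$ involves $\sum_j g(\omega^j)^5/g(\omega^{5j})\cdot\omega^{5j}(5\lambda)$, which after Hasse--Davenport becomes the ${}_4F_3$. The all-distinct class $[0,1,2,3,4]$ (your type $(1,1,1,1,1)$, orbit size $24$ in $W/{\sim}$) instead collapses via Hasse--Davenport to a constant multiple of $\sum_j\omega^{5j}(\lambda)$, which vanishes precisely because $\lambda^5\neq 1$; this is why no coefficient $24$ appears in the formula. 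The two $20q^2$ terms come from $[0,0,0,1,4]$ and $[0,0,0,2,3]$, and the two $30q^2$ terms from the two $(2,2,1)$-type classes in $W/{\sim}$. The cleanest fix is to invoke Koblitz's formula directly (as the paper does for $d=6$) rather than rederive it, and then carry out the orbit-by-orbit identification.
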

In {\cite[Theorem 1.2]{cased}}, she also explained the formula for the Dwork hypersurfaces of odd degree in terms of Greene's hypergeometric function. We remark that the coefficients of Greene's hypergeometric functions in her formula are products of Gauss sums. From a comparison with Matsumoto-Terasoma-Yamazaki's formula for the periods of the Hesse cubic curve over $\mathbb{C}$, the author considers that their coefficients should be written by Jacobi sums. (See also remark \ref{analogy}.) 


In this paper, we consider the Dwork hypersurfaces of degree six. We put $t:=(q-1)/6$, $\omega^t:=\omega_6$, $\omega^{2t}:=\omega_3$, and $\omega^{3t}:=\omega_2$. 
The main result of this paper is the following.

\begin{theorem}\label{main theorem}
Let $q=p^e$ be a power of a prime number such that $q$ is congruent to $1$ modulo $6$. For $\lambda \in \mathbb{F}_q$ with $\lambda\neq 0$ and $\lambda^6 \neq 1$, we have
\footnotesize
\begin{equation*}
\begin{split}
\#&X_{\lambda}^6(\mathbb{F}_q)=\frac{q^5-1}{q-1}+360q^2\omega_2(1-\lambda^6) \\
&+q^4\cdot {}_5F_4
\left(
\begin{array}{ccccc|c}
\omega_6 & \omega_3 & \omega_2 & \overline{\omega}_3 & \overline{\omega}_6\\
& \epsilon & \epsilon & \epsilon & \epsilon \\
\end{array}
\ \frac{1}{\lambda^6} \right)_q +30q^3 \omega_6(-1)\cdot {}_3F_2
\left(
\begin{array}{ccc|c}
\omega_3 & \omega_2 & \overline{\omega}_3 \\
& \epsilon & \epsilon \\
\end{array}
\ \frac{1}{\lambda^6} \right)_q \\
&+30q^3\cdot {}_3F_2
\left(
\begin{array}{ccc|c}
\omega_6 & \omega_2 & \overline{\omega}_6 \\
& \epsilon & \epsilon \\
\end{array}
\ \frac{1}{\lambda^6} \right)_q \\
&-15q^3\omega_6(-1) J(\omega_2, \overline{\omega}_3,\overline{\omega}_6)\cdot {}_4F_3
\left(
\begin{array}{cccc|c}
\omega_6 & \overline{\omega}_6 & \overline{\omega}_3 & \omega_3 \\
& \epsilon & \epsilon & \omega_2 \\
\end{array}
\ \frac{1}{\lambda^6} \right)_q \\
&-20q^3\omega_6(-1) J(\omega_6,\omega_3,\omega_2)\cdot {}_4F_3
\left(
\begin{array}{cccc|c}
\omega_6 & \omega_2 & \overline{\omega}_3 & \overline{\omega}_6 \\
& \epsilon & \omega_3 & \omega_3 \\
\end{array}
\ \frac{1}{\lambda^6} \right)_q \\
&+60q^2 \omega_6(-1)J(\omega_6, \omega_6, \overline{\omega}_3)J(\omega_2,\overline{\omega}_3,\overline{\omega}_6)\cdot {}_3F_2
\left(
\begin{array}{ccc|c}
\omega_6 & \overline{\omega}_3 & \omega_2 \\
& \epsilon & \overline{\omega}_6 \\
\end{array}
\ \frac{1}{\lambda^6} \right)_q \\
&+60q^2J(\omega_3,\omega_3,\omega_3)J(\omega_2, \overline{\omega}_3, \overline{\omega}_6)\cdot{}_3F_2
\left(
\begin{array}{ccc|c}
\omega_3 & \overline{\omega}_6 & \omega_2 \\
& \epsilon & \omega_6 \\
\end{array}
\ \frac{1}{\lambda^6} \right)_q \\
&+90q^3\cdot{}_3F_2
\left(
\begin{array}{ccc|c}
\omega_2 & \overline{\omega}_3 & \overline{\omega}_6 \\
& \omega_6 & \omega_3 \\
\end{array}
\ \frac{1}{\lambda^6} \right)_q \\
&-30q^2J(\omega_6,\omega_6)J(\omega_6, \omega_3, \omega_2)\cdot {}_3F_2
\left(
\begin{array}{ccc|c}
\omega_6 & \omega_2 & \overline{\omega}_6 \\
& \omega_3 & \overline{\omega}_3 \\
\end{array}
\ \frac{1}{\lambda^6} \right)_q \\
&-120q^2J(\omega_6,\omega_3,\omega_2)\cdot {}_2F_1
\left(
\begin{array}{cc|c}
\omega_6 & \omega_3 \\
& \epsilon \\
\end{array}
\ \frac{1}{\lambda^6} \right)_q-120q^2 J(\omega_2,\overline{\omega}_3,\overline{\omega}_6)\cdot {}_2F_1
\left(
\begin{array}{cc|c}
\overline{\omega}_3 & \overline{\omega}_6 \\
& \epsilon \\
\end{array}
\ \frac{1}{\lambda^6} \right)_q \\
&-180q^2J(\omega_6,\omega_3, \omega_2)\cdot {}_2F_1
\left(
\begin{array}{cc|c}
\omega_3 & \overline{\omega}_3 \\
& \omega_2 \\
\end{array}
\ \frac{1}{\lambda^6} \right)_q -180q^2J(\omega_6,\omega_3, \omega_2)\cdot {}_2F_1
\left(
\begin{array}{cc|c}
\omega_3 & \overline{\omega}_6 \\
& \overline{\omega}_3 \\
\end{array}
\ \frac{1}{\lambda^6} \right)_q .
\end{split}
\end{equation*}
\normalsize 
\end{theorem}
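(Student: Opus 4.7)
My approach follows the strategy used by Goodson for degrees $4$ and $5$ (Theorems \ref{case4} and \ref{case5}), adapted to the richer character structure coming from the sixth roots of unity in $\widehat{\mathbb{F}}_q^\times$. Since $X_\lambda^6$ is projective,
\begin{equation*}
\#X_\lambda^6(\mathbb{F}_q) = \frac{N(\lambda)-1}{q-1},
\end{equation*}
where $N(\lambda)$ is the number of affine $\mathbb{F}_q$-solutions of the defining equation. I rewrite $N(\lambda)$ using orthogonality of additive characters:
\begin{equation*}
N(\lambda) = \frac{1}{q}\sum_{y\in\mathbb{F}_q}\sum_{x_1,\ldots,x_6\in\mathbb{F}_q}\psi\!\left(y\bigl(x_1^6+\cdots+x_6^6-6\lambda\, x_1\cdots x_6\bigr)\right),
\end{equation*}
where $\psi$ is a fixed nontrivial additive character of $\mathbb{F}_q$. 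The contribution from $y=0$ is $q^5$, yielding the leading $(q^5-1)/(q-1)$ after the final division.

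Next, for $y\neq 0$, I split the inner sum according to the subset $S\subseteq\{1,\ldots,6\}$ of coordinates that vanish. When $S\neq\emptyset$ the monomial $6\lambda x_1\cdots x_6$ is zero and the sum factors through a Fermat hypersurface in the remaining variables, producing closed-form Gauss-sum pieces. The specific configurations arising from these boundary strata, together with the $\epsilon$-orbits collected in the open-stratum analysis below, are what will account for the term $360 q^2 \omega_2(1-\lambda^6)$. On the open stratum $x_1\cdots x_6\neq 0$, I expand each of the seven factors $\psi(yx_i^6)$ and $\psi(-6\lambda y x_1\cdots x_6)$ via the Fourier inversion
\begin{equation*}
\psi(u)=\frac{1}{q-1}\sum_{\chi\in\widehat{\mathbb{F}}_q^\times}g(\chi)\,\overline{\chi}(u)\qquad (u\in\mathbb{F}_q^\times),
\end{equation*}
introducing characters $\chi_0,\chi_1,\ldots,\chi_6$. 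Orthogonality of the inner $x_i$-sums enforces $\chi_i^{6}=\overline{\chi}_0$ for every $i$, so one may write $\chi_i=\chi\,\eta_i$ with $\chi$ a fixed sixth root of $\overline{\chi}_0$ and $\eta_i\in\langle\omega_6\rangle=\{\epsilon,\omega_6,\omega_3,\omega_2,\overline{\omega}_3,\overline{\omega}_6\}$; the subsequent $y$-sum then forces the further constraint $\prod_i\eta_i=\epsilon$.

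The bulk of the work is to regroup the resulting sum
\begin{equation*}
\sum_{\chi\in\widehat{\mathbb{F}}_q^\times}\ \sum_{\substack{(\eta_1,\ldots,\eta_6)\in\langle\omega_6\rangle^{6}\\ \eta_1\cdots\eta_6=\epsilon}} (\text{Gauss-sum product})(\chi,\eta)\,\chi(\lambda^{-6})
\end{equation*}
by the $S_6$-orbit of $(\eta_1,\ldots,\eta_6)$. Each orbit contributes a combinatorial multiplicity (producing the integer coefficients $30$, $60$, $90$, $120$, $180$, $360$), a Gauss-sum product that I convert into Jacobi sums via $g(A)g(B)=J(A,B)\,g(AB)$ (yielding the explicit factors $J(\omega_6,\omega_3,\omega_2)$, $J(\omega_2,\overline{\omega}_3,\overline{\omega}_6)$, $J(\omega_6,\omega_6,\overline{\omega}_3)$, $J(\omega_3,\omega_3,\omega_3)$, and $J(\omega_6,\omega_6)J(\omega_6,\omega_3,\omega_2)$ appearing in the statement), and a residual sum over $\chi$ that, once the $B(-1)/q$ normalizations built into $\binom{A_i\chi}{B_i\chi}$ are factored out, is exactly a Greene $_{n+1}F_n$ at $1/\lambda^6$ with the parameters displayed in the theorem.

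The main obstacle is the bookkeeping in this last step: there are many orbits, and the Jacobi-sum and sign normalizations hidden inside $\binom{A}{B}$ must be extracted consistently for each hypergeometric term while keeping track of the power of $q$ in the prefactor. Particular care is needed for orbits containing $\epsilon$, since these collapse a $_5F_4$-type inner sum into a $_nF_{n-1}$ whose fragments must be reconciled with the Fermat boundary contributions from the earlier strata to reproduce $360 q^2\omega_2(1-\lambda^6)$ and to be absorbed into the leading $(q^5-1)/(q-1)$. Consistency with Theorem \ref{case4} for $d=4$ provides a useful sanity check on both the combinatorial multiplicities and the Jacobi-sum prefactors.
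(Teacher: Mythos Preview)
Your plan is essentially the paper's primary proof. The additive-character expansion you set up, together with the constraint $\eta_1\cdots\eta_6=\epsilon$ on sixth-root tuples, \emph{is} Koblitz's formula (Theorem~\ref{Kobbformula}): your tuples $(\eta_1,\dots,\eta_6)$ are exactly the elements of $W$, your free character $\chi$ is the $j$-sum in $S_{[\boldsymbol{w}]}$, and your ``boundary strata'' give the Fermat pieces $N_q(0,\boldsymbol{w})$. The paper simply cites Koblitz rather than re-deriving it, then proceeds orbit-by-orbit over the fourteen $S_6$-classes in $W/\!\sim$ exactly as you propose. Two key ingredients you will need but do not name are the Hasse--Davenport product relation (Corollary~\ref{sekikousiki}), which is what converts $\prod_i g(\omega^{w_it+j})/g(\omega^{6j})$ into the shape of a Greene ${}_{n+1}F_n$, and the Gauss-sum identity of Lemma~\ref{turai}, which is what closes the $\chi$-sum in the special orbit.

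One point where your description is slightly off: the term $360q^2\omega_2(1-\lambda^6)$ does \emph{not} arise from combining boundary Fermat strata with ``$\epsilon$-heavy'' orbits. In the paper it comes entirely from the single orbit $[0,0,1,2,4,5]$ of size $360$ (Proposition~\ref{saigo}); after Hasse--Davenport and Lemma~\ref{turai} the $\chi$-sum for this orbit collapses to a ${}_1F_0$-type expression $\overline{\omega}_2(1-1/\lambda^6)$, and the orbit's own $N_q(0,\boldsymbol{w})$ cancels the leftover constant. More generally, the paper pairs each orbit's Fermat contribution $\sum_{\boldsymbol{w}\in[\boldsymbol{w}]}N_q(0,\boldsymbol{w})$ with its own $S_{[\boldsymbol{w}]}$, and the two together give a clean hypergeometric term with no residue; your separation into ``boundary strata first, open stratum second'' will force you to match these cross-terms by hand, which is doable but messier than the paper's packaging.
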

\begin{rem}
The right hand side of this formula does not depend on the choice of $\omega$. However, each term of the right hand side of this formula may depend on the choice of $\omega$.
\end{rem}

\begin{rem}\label{analogy}
One of the novelties of the above result is an expression by using the Jacobi sum. In {\cite[Theorem 1]{MTY}}, Matsumoto-Terasoma-Yamazaki gave the formula for the periods of the Hesse cubic curve over $\mathbb{C}$ by the hypergeometric series and the beta functions. The Jacobi sum is an analogue of the beta function. Hence, Theorem \ref{main theorem} and {\cite[Theorem 1.1]{case4}} are higher-dimensional and finite field analogues of the formula due to Matsumoto-Terasoma-Yamazaki. Furthermore, Theorem \ref{main theorem} and {\cite[Theorem 1.1]{case4}} suggest that {\cite[Theorem 1.2]{cased}} can be rewritten by using the Jacobi sum.
\end{rem}

Finally, we explain the proof of our formula. In the same way as the proof of Goodson's, our proof is based on Koblitz's formula for the number of rational points on diagonal hypersurfaces. In \cite[page 145, lines 2 to 4]{cased}, Goodson pointed out a possibility to deduce her formula from Miyatani's formula in terms of McCarthy's hypergeometric functions since the relation between Greene's hypergeometric functions and McCarthy's is known in this case. (We remark that the coefficients of hypergeometric functions in his formula are not Jacobi sums but products and quotients of Gauss sums. (cf. Theorem \ref{mmiyatani} in Appendix A.)) In Appendix A, we give another proof of our formula
based on Miyatani's formula, which is simpler but we consider that the proof based on Koblitz's formula also has its own value since  it is more elementary and self-contained.

\section{example}
\begin{example} 
We define the character $\omega\in \widehat{\mathbb{F}}_{13}^\times$ by $\omega(\overline{2}^k)=\mathrm{exp}(k\pi\sqrt{-1}/6)$. $($Note that $\overline2\in \mathbb{F}_{13}$ is a generator of $\mathbb{F}_{13}^\times$.$)$ 
Then $\omega$ is a generator of $\widehat{\mathbb{F}}_{13}^\times$. We put $\zeta=\mathrm{exp}(2\pi\sqrt{-1}/12)$. For $\lambda\in \mathbb{F}_q$ with $\lambda \neq 0$ and $\lambda^6 \neq 1$, we obtain
\tiny
\begin{equation*}
\begin{split}
\#&X_{\lambda}^6(\mathbb{F}_{13})=\frac{13^5-1}{13-1}+360\cdot13^2\omega_2(1-\lambda^6)+13^4\cdot {}_5F_4
\left(
\begin{array}{ccccc|c}
\omega_6 & \omega_3 & \omega_2 & \overline{\omega}_3 & \overline{\omega}_6\\
& \epsilon & \epsilon & \epsilon & \epsilon \\
\end{array}
\ \frac{1}{\lambda^6} \right)_{13} \\
&+30\cdot13^3\cdot {}_3F_2
\left(
\begin{array}{ccc|c}
\omega_3 & \omega_2 & \overline{\omega}_3 \\
& \epsilon & \epsilon \\
\end{array}
\ \frac{1}{\lambda^6} \right)_{13}+30\cdot13^3\cdot {}_3F_2
\left(
\begin{array}{ccc|c}
\omega_6 & \omega_2 & \overline{\omega}_6 \\
& \epsilon & \epsilon \\
\end{array}
\ \frac{1}{\lambda^6} \right)_{13} \\
&-15\cdot13^3 (4\zeta^2-1)\cdot{}_4F_3
\left(
\begin{array}{cccc|c}
\omega_6 & \overline{\omega}_6 & \overline{\omega}_3 & \omega_3 \\
& \epsilon & \epsilon & \omega_2 \\
\end{array}
\ \frac{1}{\lambda^6} \right)_{13} \\
&-20\cdot13^3 (-4\zeta^2+3)\cdot{}_4F_3
\left(
\begin{array}{cccc|c}
\omega_6 & \omega_2 & \overline{\omega}_3 & \overline{\omega}_6 \\
& \epsilon & \omega_3 & \omega_3 \\
\end{array}
\ \frac{1}{\lambda^6} \right)_{13} \\
&+60\cdot13^2 (\zeta^2-4)(4\zeta^2-1)\cdot{}_3F_2
\left(
\begin{array}{ccc|c}
\omega_6 & \overline{\omega}_3 & \omega_2 \\
& \epsilon & \overline{\omega}_6 \\
\end{array}
\ \frac{1}{\lambda^6} \right)_{13} \\
&+60\cdot13^2(3\zeta^2+1)(4\zeta^2-1)\cdot{}_3F_2
\left(
\begin{array}{ccc|c}
\omega_3 & \overline{\omega}_6 & \omega_2 \\
& \epsilon & \omega_6 \\
\end{array}
\ \frac{1}{\lambda^6} \right)_{13} \\
&+90\cdot13^3\cdot {}_3F_2
\left(
\begin{array}{ccc|c}
\omega_2 & \overline{\omega}_3 & \overline{\omega}_6 \\
& \omega_6 & \omega_3 \\
\end{array}
\ \frac{1}{\lambda^6} \right)_{13}-30\cdot13^2(-\zeta^2+4)(-4\zeta^2+3)\cdot{}_3F_2
\left(
\begin{array}{ccc|c}
\omega_6 & \omega_2 & \overline{\omega}_6 \\
& \omega_3 & \overline{\omega}_3  \\
\end{array}
\ \frac{1}{\lambda^6} \right)_{13} \\
&-120\cdot13^2(-4\zeta^2+3)\cdot{}_2F_1
\left(
\begin{array}{cc|c}
\omega_6 & \omega_3 \\
& \epsilon \\
\end{array}
\ \frac{1}{\lambda^6} \right)_{13}-120\cdot13^2 (4\zeta^2-1)\cdot {}_2F_1
\left(
\begin{array}{cc|c}
\overline{\omega}_3 & \overline{\omega}_6 \\
& \epsilon \\
\end{array}
\ \frac{1}{\lambda^6} \right)_{13} \\
&-180\cdot13^2(-4\zeta^2+3)\cdot {}_2F_1
\left(
\begin{array}{cc|c}
\omega_3 & \overline{\omega}_3 \\
& \omega_2 \\
\end{array}
\ \frac{1}{\lambda^6} \right)_{13}-180\cdot13^2(-4\zeta^2+3)\cdot {}_2F_1
\left(
\begin{array}{cc|c}
\omega_3 & \overline{\omega}_6 \\
& \overline{\omega}_3 \\
\end{array}
\ \frac{1}{\lambda^6} \right)_{13}.
\end{split}
\end{equation*}
\normalsize 
\end{example}

\section{The proof by Koblitz's formula}\label{proof by Koblitz's formula}
\subsection{Identities for the Gauss sum}
In this subsection, we give identities for the Gauss sum. First, we recall the Hasse-Davenport product relation.
\begin{theorem}[{\cite[Theorem 10.1]{lang}}]\label{hassedave}
Let $m$ be a positive integer and let $q$ be a power of a prime number such that $q$ is congruent to $1$ modulo $m$. For a character $\chi \in \widehat{\mathbb{F}}_q^\times$ of order $m$
 and a character $\psi \in \widehat{\mathbb{F}}_q^\times$, we have 
\begin{equation*}
\prod_{i=0}^{m-1}g(\chi^i \psi)=-g(\psi^m)\psi^{-m}(m)\prod_{i=0}^{m-1}g(\chi^i).
\end{equation*}
\end{theorem}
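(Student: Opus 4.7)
The plan is to prove this classical Hasse--Davenport product relation for Gauss sums by direct character-sum manipulation; morally this is the proof strategy in \cite[Ch.~10]{lang}, which I would re-derive here. First, I would dispose of the degenerate case $\psi = \epsilon$: since $g(\epsilon) = -1$ and $\epsilon^{-m}(m) = 1$, the right-hand side reduces to $-(-1) \cdot 1 \cdot \prod_{i=0}^{m-1} g(\chi^i) = \prod_{i=0}^{m-1} g(\chi^i)$, which matches the left-hand side term by term (note that $\chi^i \epsilon = \chi^i$ under our convention $\epsilon(0)=0$). Hence we may assume $\psi$ is nontrivial.

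For nontrivial $\psi$, I would expand the product as an $m$-fold character sum
$$\prod_{i=0}^{m-1} g(\chi^i \psi) = \sum_{x_0, \ldots, x_{m-1} \in \mathbb{F}_q^\times} \psi(x_0 x_1 \cdots x_{m-1}) \left(\prod_{i=0}^{m-1} \chi^i(x_i)\right) e\!\left(\sum_{i=0}^{m-1} x_i\right),$$
where $e(y) := \exp(2\pi\sqrt{-1}\,\mathrm{tr}(y)/p)$ is the standard additive character. I would then reparametrize by $s := \sum_i x_i$ and $y_i := x_i/s$ (so $\sum_i y_i = 1$), separating the summation into an outer sum over $s \in \mathbb{F}_q^\times$ and an inner sum over the hyperplane $\{\sum y_i = 1\}$. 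The outer sum produces a Gauss sum for the character $\psi^m \cdot \chi^{m(m-1)/2}$, whose $\chi$-exponent simplifies via $\chi^m = \epsilon$ (trivially when $m$ is odd, to the quadratic character when $m$ is even, with appropriate sign bookkeeping). The inner sum is a multi-variable Jacobi sum indexed by the characters $\{\chi^i \psi\}_{i=0}^{m-1}$.

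The main obstacle is evaluating this Jacobi sum without circularity, since the naive identity $J(\alpha_1, \ldots, \alpha_n) = \prod g(\alpha_i)/g(\prod \alpha_i)$ would simply return the original product of Gauss sums. Instead, I would exploit the orthogonality relation
$$\sum_{i=0}^{m-1} \chi^i(y) = m \cdot \mathbf{1}_{y \in (\mathbb{F}_q^\times)^m}$$
to localize one of the Jacobi sum variables onto the subgroup of $m$-th powers; the substitution $y = z^m$ (and its interaction with the additive normalization forcing a factor of $m$) is what produces the pivotal twist $\psi^{-m}(m)$. Collecting all surviving factors yields $g(\psi^m)\psi^{-m}(m)$ on one side and $\prod_{i=0}^{m-1} g(\chi^i)$ on the other, with the overall sign $-$ inherited from the $i=0$ factor $g(\chi^0) = g(\epsilon) = -1$. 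Tracking this combinatorial bookkeeping carefully---particularly the interaction between the hyperplane substitution, the orthogonality projection onto $(\mathbb{F}_q^\times)^m$, and the $m$-th power change of variable---is the delicate part of the argument, but each step is elementary once the right parametrization is chosen.
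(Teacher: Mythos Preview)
The paper does not supply its own proof of this statement: the Hasse--Davenport product relation is quoted from \cite[Theorem~10.1]{lang} and used as a black box, so there is no in-paper argument against which to compare your proposal.

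On your outline itself: the degenerate case $\psi=\epsilon$ is handled correctly, and for nontrivial $\psi$ the opening move---expanding the product as an $m$-fold character sum and reparametrizing by $s=\sum_i x_i$, $y_i=x_i/s$---is standard and does reduce matters (up to the $s=0$ stratum, which you do not address) to a Gauss sum in $s$ times the Jacobi sum $J(\psi,\chi\psi,\ldots,\chi^{m-1}\psi)$. The gap lies in your proposed evaluation of this Jacobi sum. The orthogonality relation $\sum_{i=0}^{m-1}\chi^i(y)=m\cdot\mathbf{1}_{y\in(\mathbb{F}_q^\times)^m}$ is adapted to \emph{sums} over $i$ and naturally yields identities for $\sum_i g(\chi^i\psi)$; in the Jacobi sum at hand, by contrast, each variable $y_i$ carries the \emph{fixed} power $\chi^i$, and there is no summation over $i$ to which orthogonality can be applied. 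You have not explained how a projection onto $(\mathbb{F}_q^\times)^m$ is to be introduced into this product structure, nor which variable is the ``one'' to be localized. The twist $\psi^{-m}(m)$ does indeed arise from the easy substitution $x\mapsto mx$ inside $g(\psi^m)$, but that step is separate from---and does not resolve---the Jacobi-sum evaluation. As written, the crucial step of your plan is a gesture rather than a mechanism; to make it a proof you would need to exhibit the missing identity explicitly, or else defer to \cite{lang} as the paper does.
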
 
\begin{cor}\label{sekikousiki}
Let $q$ be a power of a prime number such that $q$ is congruent to $1$ modulo $6$. For j $\in \mathbb{Z}$ and $t=(q-1)/6$, we have
\begin{equation*}
g(\omega^{6j})=\frac{\prod_{i=0}^{5}g(\omega^{it+j})}{\omega^{-6j}(6)\prod_{i=1}^{5}g(\omega^{it})}.
\end{equation*}
\end{cor}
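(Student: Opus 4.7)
The plan is to derive the corollary as a direct specialization of the Hasse--Davenport product relation (Theorem \ref{hassedave}). Specifically, I would set $m=6$, take $\chi=\omega^t$ (noting that $\omega^t$ has order exactly $6$, since $\omega$ generates $\widehat{\mathbb{F}}_q^\times$ and $t=(q-1)/6$), and take $\psi=\omega^j$ for the given integer $j$.

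With these choices, Theorem \ref{hassedave} yields
\begin{equation*}
\prod_{i=0}^{5}g(\omega^{it+j})
=-g(\omega^{6j})\,\omega^{-6j}(6)\prod_{i=0}^{5}g(\omega^{it}).
\end{equation*}
On the right-hand side, the $i=0$ factor is $g(\omega^0)=g(\epsilon)=-1$ (as noted just after the definition of the Gauss sum in the introduction, since $\epsilon(0)=0$). Pulling this $-1$ out of the product cancels the leading minus sign, giving
\begin{equation*}
\prod_{i=0}^{5}g(\omega^{it+j})
=g(\omega^{6j})\,\omega^{-6j}(6)\prod_{i=1}^{5}g(\omega^{it}).
\end{equation*}

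Since $\omega^{-6j}(6)$ is a nonzero complex number and each $g(\omega^{it})$ for $1\le i\le 5$ is a nonzero Gauss sum of a nontrivial character, the product in the denominator is nonzero. Dividing through then isolates $g(\omega^{6j})$ and yields the claimed formula. There is essentially no obstacle here: the only thing one must check carefully is that $\omega^t$ genuinely has order $m=6$, so that Theorem \ref{hassedave} applies, and that the $i=0$ factor on the right is correctly identified with $-1$ in order to absorb the minus sign.
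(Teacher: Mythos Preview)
Your proof is correct and follows exactly the paper's approach: the paper simply states that the corollary follows from Theorem~\ref{hassedave} with $m=6$, $\chi=\omega^t$, and $\psi=\omega^j$. You have merely spelled out the details (checking $\omega^t$ has order $6$ and absorbing the minus sign via $g(\epsilon)=-1$), which is fine.
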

\begin{proof}
This follows from Theorem \ref{hassedave} applying for $m=6$, $\chi=\omega^t$, and $\psi=\omega^j$.
\end{proof}
We use the following lemma to prove Theorem $\ref{main theorem}$.
\begin{lemma}\label{turai}
Let $a,b$ be multiples of $t$. Then, we have
\begin{equation*}
\sum_{j=0}^{q-2}g(\omega^{j+a})g(\omega^{-j+b})\omega^j(-1)\omega^{6j}(\lambda)=(q-1)g(\omega^{a+b})\omega^b(-1)\omega^{-(a+b)}(1-\lambda^6).
\end{equation*}
\end{lemma}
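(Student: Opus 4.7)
The plan is to convert the product of two Gauss sums into a Jacobi sum times a single Gauss sum (which no longer depends on $j$), then expand the Jacobi sum and exchange the order of summation so that character orthogonality collapses the sum over $j$ to a single value of $x$.

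Concretely, I would first apply the standard identity $g(A)g(B)=J(A,B)\,g(AB)$ with $A=\omega^{j+a}$ and $B=\omega^{-j+b}$. The key point is that $AB=\omega^{a+b}$ is independent of $j$, so $g(\omega^{a+b})$ comes outside the sum, leaving
\[
g(\omega^{a+b})\sum_{j=0}^{q-2} J(\omega^{j+a},\omega^{-j+b})\,\omega^j(-\lambda^6).
\]
Next, I would expand the Jacobi sum as
\[
J(\omega^{j+a},\omega^{-j+b})=\sum_{x\in \mathbb{F}_q}\omega^{a}(x)\omega^{b}(1-x)\,\omega^{j}\!\left(\tfrac{x}{1-x}\right),
\]
where the values $x=0,1$ contribute nothing because all nontrivial powers of $\omega$ vanish at $0$. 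Swapping the two summations reduces the inner sum to $\sum_{j=0}^{q-2}\omega^{j}\!\bigl(-\lambda^6 x/(1-x)\bigr)$, and by orthogonality this equals $q-1$ precisely when $-\lambda^6 x/(1-x)=1$, i.e.\ when $x=1/(1-\lambda^6)$ (well-defined since $\lambda^6\neq 1$), and $0$ otherwise. Substituting this value of $x$ and using $1-x=-\lambda^6/(1-\lambda^6)$ produces the factor $\omega^a(1-\lambda^6)^{-1}\cdot\omega^b(-1)\omega^b(\lambda^6)\omega^b(1-\lambda^6)^{-1}$. Finally, because $b$ is a multiple of $t=(q-1)/6$, the character $\omega^{6b}$ is trivial and $\omega^b(\lambda^6)=\omega^{6b}(\lambda)=1$ for $\lambda\ne 0$, which eliminates that factor and yields the right-hand side.

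The main obstacle is handling the degenerate cases where the product identity $g(A)g(B)=J(A,B)g(AB)$ requires care: namely when $\omega^{a+b}=\epsilon$, or when $\omega^{j+a}=\epsilon$ or $\omega^{-j+b}=\epsilon$ for some $j$. In the latter situations one uses the conventions $g(\epsilon)=-1$ and $J(\epsilon,\psi)=-1$ (for $\psi\ne\epsilon$), which make the identity remain valid as long as $AB\neq\epsilon$; the case $\omega^{a+b}=\epsilon$ must then be checked by a direct computation using $g(\chi)g(\bar\chi)=\chi(-1)q$ and verifying that the resulting expression still agrees with the right-hand side under the convention $g(\epsilon)=-1$. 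Once these edge cases are absorbed, the orthogonality argument goes through uniformly in $a$ and $b$.
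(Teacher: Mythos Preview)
Your argument is correct. The paper itself gives no detailed proof, deferring entirely to Goodson's Proposition~3.1 with the degree constant changed to~$6$, so there is little to compare against directly. Your route---reduce $g(\omega^{j+a})g(\omega^{-j+b})$ to $J(\omega^{j+a},\omega^{-j+b})\,g(\omega^{a+b})$, expand the Jacobi sum, swap sums, and use orthogonality of characters---is one of the two standard proofs of such identities. The other (and the one typically found in the cited literature) expands both Gauss sums directly through the additive character, sums over $j$ first to force $y=-\lambda^6 x$, and then recovers $g(\omega^{a+b})$ by the linear substitution $u=(1-\lambda^6)x$. That variant has the minor advantage of treating the case $\omega^{a+b}=\epsilon$ uniformly, whereas your Jacobi-sum approach requires the separate verification you describe. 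That check is indeed routine: for $a+b\equiv 0\pmod{q-1}$ one uses $g(\chi)g(\bar\chi)=\chi(-1)q$ for the $q-2$ nondegenerate values of $j$ and $g(\epsilon)^2=1$ for the remaining one, and both sides come out to $-(q-1)\omega^b(-1)$. With that paragraph filled in, your write-up is complete.
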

\begin{proof}
We can prove this result similarly as in \cite[Proposition 3.1]{cased}. (Just replace 4 with 6 everywhere.)
\end{proof}
\subsection{Koblitz's formula for diagonal hypersurfaces}

In this subsection, we recall the general formula by Koblitz. Koblitz gave a formula for the number of $\mathbb{F}_q$-rational points on diagonal hypersurfaces 
$$D_{\lambda}: x_1^d+x_2^d+\cdots +x_n^d-d\lambda x^{h_1}_1x^{h_2}_2\cdots x^{h_n}_n=0,$$ where $d\mid q-1$, $h_1+\cdots +h_n=d$ and $\gcd (d, h_1, \ldots, h_n)=1$.  Let $W$ be the set of all $n$-tuples $\boldsymbol{w}=(w_1, \ldots, w_n)$ of the elements of $\mathbb{Z}/d\mathbb{Z}$ satisfying $\sum_{i}w_i=0$, that is, we put
\begin{align*}
W:=\{\boldsymbol{w}=(w_1,\ldots, w_n)\in \left(\mathbb{Z}/d\mathbb{Z}\right)^n \mid \sum_{i=1}^nw_i=0 \}.
\end{align*}
We put $t:=(q-1)/d$. Then it is known that the number of $\mathbb{F}_q$-rational points on the projective diagonal hypersurface
\begin{align*}
x_1^d+\cdots +x_n^d=0
\end{align*}
is given by $\sum_{\boldsymbol{w}\in W}N_q(0,\boldsymbol{w})$, where
\footnotesize
\begin{equation}\label{hypersurface}
N_q(0,\boldsymbol{w}):=\begin{cases}
(q^{n-1}-1)/(q-1) & (w_i=0 \ \text{for all}\ i)\\
(1/q)\prod_{i=1}^{n}g(\omega^{w_it}) & (w_i \neq 0\ \text{for any}\ i)\\
0 &(\text{otherwise}).
\end{cases}
\end{equation}
\normalsize
(For example, see \cite[(2.12)]{Koblitz} and \cite{Weil}.)
We define an equivalence relation $\sim$ on $W$ by
\begin{align*}
\boldsymbol{w}\sim \boldsymbol{w'} \quad \text{if} \ \boldsymbol{w}-\boldsymbol{w'} \ \text{is a multiple of} \ (h_1,\ldots, h_n).
\end{align*}
We denote an equivalence class of $\boldsymbol{w}$ by $[\boldsymbol{w}]$. Then, we have the following theorem.
\begin{theorem}[{\cite[Theorem 2]{Koblitz}}]\label{Kobbformula}
We put $t:=(q-1)/d.$ For $\lambda\in \mathbb{F}_q$ with $\lambda\neq 0$ and $\lambda^d\neq (h_1^{h_1}\cdots h_n^{h_n})^{-1}$, we have
\footnotesize
\begin{equation}\label{Kobblitz}
\#D_\lambda(\mathbb{F}_q)=\sum_{\boldsymbol{w}\in W}N_q(0,\boldsymbol{w})+\frac{1}{q-1}\sum_{[\boldsymbol{w}]\in W/\sim}\sum_{j=0}^{q-2} \frac{\prod_{i=1}^ng(\omega^{w_it+h_ij})}{g(\omega^{dj})}\omega^{dj}(d\lambda).
\end{equation}
\normalsize
\end{theorem}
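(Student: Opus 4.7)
Since Theorem \ref{Kobbformula} is stated as \cite[Theorem 2]{Koblitz}, my plan is to outline Koblitz's character-sum strategy rather than produce an independent argument; this is the natural way to establish such a formula. The approach is a systematic Gauss-sum expansion of the affine count of the homogeneous polynomial $f(x_1,\ldots,x_n) = x_1^d + \cdots + x_n^d - d\lambda\, x_1^{h_1}\cdots x_n^{h_n}$, followed by bookkeeping to pass from the affine count to the projective count.

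The first step is to relate the projective and affine counts: since $f$ is homogeneous of degree $d$ and $0$ is always an affine zero, one has
\begin{equation*}
\#D_\lambda(\mathbb{F}_q) = \frac{N_{\mathrm{aff}}-1}{q-1},\qquad N_{\mathrm{aff}}:=\#\{x\in\mathbb{F}_q^n : f(x)=0\}.
\end{equation*}
Fixing a nontrivial additive character $\psi$ of $\mathbb{F}_q$ and using orthogonality, one writes $N_{\mathrm{aff}} = q^{-1}\sum_{t\in\mathbb{F}_q}\sum_x \psi(tf(x))$. The $t=0$ term contributes $q^{n-1}$, and the remaining contribution must be analysed for $t\in\mathbb{F}_q^\times$.

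For each such $t$, I would expand $\psi(-td\lambda\prod x_i^{h_i})$ by the Gauss sum inversion $\psi(y) = (q-1)^{-1}\sum_{\chi}g(\chi)\overline{\chi}(y)$ valid for $y\in\mathbb{F}_q^\times$, and evaluate the separated one-variable sums $\sum_{x_i}\psi(tx_i^d)\chi(x_i)^{h_i}$ using the standard formula for $\sum_x\psi(tx^d)\eta(x)$, which is nonzero only when $\eta^d=\epsilon$. Parametrising each surviving character as $\omega^{w_i t+h_i j}$ with $\boldsymbol{w}\in(\mathbb{Z}/d\mathbb{Z})^n$ and $j\in\mathbb{Z}/(q-1)\mathbb{Z}$, and then summing over $t$, imposes the constraint $\sum_i w_i\equiv 0\pmod d$, i.e.\ $\boldsymbol{w}\in W$. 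The resulting double sum over $(\boldsymbol{w},j)$ produces the Gauss-sum factor $\prod_i g(\omega^{w_i t+h_i j})/g(\omega^{dj})$ together with the character factor $\omega^{dj}(d\lambda)$ encoding the $\lambda$-dependence.

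Finally, the shift $\boldsymbol{w}\mapsto \boldsymbol{w}+j_0(h_1,\ldots,h_n)$ is absorbed by the change of variable $j\mapsto j-j_0$, so the $\boldsymbol{w}$-sum descends to $W/\sim$ and cancels one factor of $q-1$, producing the second term of \eqref{Kobblitz}. The degenerate strata, where some subset of the $x_i$ vanishes and the monomial drops out, reassemble via \cite{Weil} and \cite[(2.12)]{Koblitz} into $\sum_{\boldsymbol{w}\in W}N_q(0,\boldsymbol{w})$. The main technical obstacle would be the bookkeeping of the $\omega^{dj}(d\lambda)$ factor emerging from the Gauss-sum expansion and the clean identification of the degenerate strata; the hypotheses $\lambda\neq 0$, $\lambda^d\neq(h_1^{h_1}\cdots h_n^{h_n})^{-1}$, and $\gcd(d,h_1,\ldots,h_n)=1$ ensure that no Gauss sum in the denominator degenerates and that every class $[\boldsymbol{w}]$ has size $d$, keeping the combinatorics uniform.
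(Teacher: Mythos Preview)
The paper does not give its own proof of Theorem~\ref{Kobbformula}; it is simply quoted from \cite[Theorem~2]{Koblitz} and then applied. Your proposal is therefore not to be compared against a proof in the paper but against Koblitz's original argument, and indeed your outline follows the standard character-sum strategy that Koblitz uses: expand the affine count via an additive character, open the deformation term with Gauss-sum inversion, evaluate the diagonal sums, and then observe the shift symmetry that collapses the sum over $W$ to one over $W/\sim$. As a sketch of that argument your proposal is essentially correct.

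Two small points are worth flagging. First, you use the letter $t$ both for the integer $(q-1)/d$ fixed in the statement and for the dummy summation variable in $\sum_{t\in\mathbb{F}_q}\psi(tf(x))$; this clash should be repaired before the sketch could be fleshed out. Second, your final sentence misidentifies the role of the hypothesis $\lambda^d\neq (h_1^{h_1}\cdots h_n^{h_n})^{-1}$: the denominator $g(\omega^{dj})$ equals $-1$ whenever $dj\equiv 0$ regardless of $\lambda$, so there is no degeneration issue there. As the paper's remark after the theorem indicates, that hypothesis (together with $\lambda\neq 0$) is the smoothness condition on $D_\lambda$, which is what makes Koblitz's derivation go through without exceptional terms.
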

\begin{rem}
The assumptions that $\lambda$ is not equal to zero and $\lambda^d$ is not equal to $(h_1^{h_1}\cdots h_n^{h_n})^{-1}$ imply that $D_{\lambda}$ is smooth.
\end{rem}
\begin{rem}
Note that $g(\omega^{w_it+h_ij})$ itself is not well-defined, and only $\prod_{i=1}^ng(\omega^{w_it+h_ij})$ is well-defined since we assume that $\sum_i w_i$ and $\sum_i h_i$ are equal to zero in $\mathbb{Z}/d\mathbb{Z}$.
\end{rem}
\begin{rem}
The notation in Theorem \ref{Kobbformula} differs from that in \cite[Theorem 2]{Koblitz}. In \cite[Theorem 2]{Koblitz}, the summation of the second term is over $s\in (d/(q-1))\mathbb{Z}/\mathbb{Z}$ and $w\in W$. We obtain an expression that is equivalent to the identity (\ref{Kobblitz}) by replacing $s$ with $ds$ and summing over $s\in (1/(q-1))\mathbb{Z}/\mathbb{Z}$.
\end{rem}

\subsection{An application of Koblitz's formula}\label{keisann}
We explain our strategy for proving the main theorem. 
As a first step, we apply Koblitz's formula for the Dwork hypersurfaces of degree six and we list up all of the elements of\ $W/\sim$. Next, we calculate the right hand side of the identity (\ref{Koblitz}), which is given in this subsection.
From $\sum_{\boldsymbol{w}\in W}N_q(0,\boldsymbol{w})=\sum_{[\boldsymbol{w}]\in W/\sim}\sum_{\boldsymbol{w}\in [\boldsymbol{w}]}N_q(0,\boldsymbol{w})$, we calculate the right hand side of the identity (\ref{Koblitz}) for each $[\boldsymbol{w}]\in W/\sim$.

By (\ref{Kobblitz}) with $d=n=6$ and $(h_1,\ldots, h_n)=(1,1,1,1,1,1)$, we have the following.
\begin{cor}\label{Kobformula}
For $\lambda\in \mathbb{F}_q$ with $\lambda\neq 0$ and $\lambda^6\neq 1$, we have
\footnotesize
\begin{equation}\label{Koblitz}
\#X_{\lambda}^6(\mathbb{F}_q)=\sum_{\boldsymbol{w}\in W}N_q(0,\boldsymbol{w})+\frac{1}{q-1}\sum_{[\boldsymbol{w}]\in W/\sim}\sum_{j=0}^{q-2}\frac{\prod_{i=1}^6g(\omega^{w_it+j})}{g(\omega^{6j})}\omega^{6j}(6\lambda).
\end{equation}
\normalsize
\end{cor}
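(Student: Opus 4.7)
The plan is to obtain Corollary \ref{Kobformula} as a direct specialization of Theorem \ref{Kobbformula}, so the ``proof'' amounts to checking that the hypotheses match and that the specialization produces precisely the displayed formula. I do not expect any genuine obstacle; the only thing to verify is the bookkeeping.

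First, I would observe that the defining equation of $X_\lambda^6$, namely $x_1^6+\cdots+x_6^6=6\lambda x_1x_2\cdots x_6$, is exactly the diagonal hypersurface $D_\lambda$ of Theorem \ref{Kobbformula} with parameters $d=6$, $n=6$, and $(h_1,\ldots,h_n)=(1,1,1,1,1,1)$. In this case the divisibility $d\mid q-1$ is the hypothesis $q\equiv 1\pmod{6}$ of the corollary; the sum condition $h_1+\cdots+h_n=d$ reads $1+1+1+1+1+1=6$; and $\gcd(d,h_1,\ldots,h_n)=\gcd(6,1,1,1,1,1,1)=1$. Thus the hypotheses of Theorem \ref{Kobbformula} are satisfied.

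Next, I would check the smoothness/genericity condition. With $h_i=1$ for all $i$, the excluded value $(h_1^{h_1}\cdots h_n^{h_n})^{-1}$ collapses to $1^{-1}=1$. Hence the assumption $\lambda^d\neq(h_1^{h_1}\cdots h_n^{h_n})^{-1}$ of Theorem \ref{Kobbformula} becomes $\lambda^6\neq 1$, which, together with $\lambda\neq 0$, is exactly the assumption of Corollary \ref{Kobformula}.

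Finally, I would substitute these parameters into the formula \eqref{Kobblitz}. The first term $\sum_{\boldsymbol{w}\in W}N_q(0,\boldsymbol{w})$ is unchanged (with $W$ now the subgroup of $(\mathbb{Z}/6\mathbb{Z})^6$ of vectors summing to zero). In the second term, $\prod_{i=1}^n g(\omega^{w_it+h_ij})$ becomes $\prod_{i=1}^6 g(\omega^{w_it+j})$, $g(\omega^{dj})$ becomes $g(\omega^{6j})$, and $\omega^{dj}(d\lambda)$ becomes $\omega^{6j}(6\lambda)$. This yields exactly the right-hand side of \eqref{Koblitz}, completing the proof. No estimates, identities, or case analyses are needed beyond this direct substitution.
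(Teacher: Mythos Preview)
Your proof is correct and follows exactly the paper's approach: the corollary is stated immediately after the sentence ``By (\ref{Kobblitz}) with $d=n=6$ and $(h_1,\ldots,h_n)=(1,1,1,1,1,1)$, we have the following,'' and your argument is just a careful verification of that specialization.
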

Let $S_6$ be the symmetric group of degree six. An action of $S_6$ on $W/\sim$ is naturally defined.
\begin{defn}
We let
\begin{equation*}
\langle[w_1, \cdots, w_6]\rangle^k=\left\{[x_1, \ldots, x_6]\in W/\sim \ \mid 
\begin{split}
\text{There exists}\ \sigma \in S_6\ \text{such that}\\
[x_{\sigma(1)},\ldots, x_{\sigma(6)}]=[w_1,\ldots,w_6]
\end{split}
\right\},
\end{equation*}
where 
\begin{equation*}
k=\#\left\{[x_1, \ldots, x_6]\in W/\sim \ \mid
\begin{split}
\text{There exists}\ \sigma \in S_6\ \text{such that} \\
[x_{\sigma(1)},\ldots, x_{\sigma(6)}]=[w_1,\ldots,w_6]
\end{split}
\right\}.
\end{equation*}
\end{defn}
By abuse of notation, we denote $\overline{a}=a$ for $\overline{a}\in \mathbb{Z}/6\mathbb{Z}$. Then, we have
\begin{align*}
\begin{split}
W/\sim \ &=\langle[0,0,0,0,0,0]\rangle^1\cup\langle[0,0,0,0,1,5]\rangle^{30}\cup\langle[0,0,0,0,2,4]\rangle^{30} \\
&\cup\langle[0,0,0,0,3,3]\rangle^{15}\cup\langle[0,0,0,1,1,4]\rangle^{60}\cup\langle[0,0,0,1,2,3]\rangle^{120} \\
&\cup\langle[0,0,0,2,2,2]\rangle^{20}\cup\langle[0,0,0,2,5,5]\rangle^{60}\cup\langle[0,0,0,3,4,5]\rangle^{120} \\
&\cup\langle[0,0,1,1,2,2]\rangle^{90}\cup\langle[0,0,2,2,4,4]\rangle^{30}\cup\langle[0,0,2,2,3,5]\rangle^{180} \\
&\cup\langle[0,0,1,3,3,5]\rangle^{180}\cup\langle[0,0,1,2,4,5]\rangle^{360}.
\end{split}
\end{align*}
We put 
\begin{equation*}
S_{[\boldsymbol{w}]}:=(q-1)^{-1}\sum_{j=0}^{q-2}\left(\prod_{i=1}^6g(\omega^{w_it+j})/g(\omega^{6j})\right)\omega^{6j}(6\lambda).
\end{equation*}

\begin{prop}\label{saisyo}
We obtain the following identities:
\small
\begin{equation}\label{[0,0,0,0,0,0]}
\begin{split}
\sum_{\boldsymbol{w}\in [0,0,0,0,0,0]}&N_q(0,\boldsymbol{w})+S_{[0,0,0,0,0,0]}\\
&=\frac{q^5-1}{q-1}+ q^4 \cdot {}_5F_4
\left(
\begin{array}{ccccc|c}
\omega_6 & \omega_3 & \omega_2 & \overline{\omega}_3 & \overline{\omega}_6\\
& \epsilon & \epsilon & \epsilon & \epsilon \\
\end{array}
\ \frac{1}{\lambda^6} \right)_q
\end{split}
\end{equation}

\begin{equation}\label{[0,0,0,0,1,5]}
\begin{split}
\sum_{\boldsymbol{w}\in [0,0,0,0,1,5]}&N_q(0,\boldsymbol{w})+S_{[0,0,0,0,1,5]}=q^3 \cdot \omega_6(-1)\cdot {}_3F_2
\left(
\begin{array}{ccc|c}
\omega_3 & \omega_2 & \overline{\omega}_3\\
& \epsilon & \epsilon \\
\end{array}
\ \frac{1}{\lambda^6} \right)_q
\end{split}
\end{equation}

\begin{equation}\label{[0,0,0,0,2,4]}
\begin{split}
\sum_{\boldsymbol{w}\in [0,0,0,0,2,4]}&N_q(0,\boldsymbol{w})+S_{[0,0,0,0,2,4]}=q^3\cdot {}_3F_2
\left(
\begin{array}{ccc|c}
\omega_6 & \omega_2 & \overline{\omega}_6 \\
& \epsilon & \epsilon \\
\end{array}
\ \frac{1}{\lambda^6} \right)_q
\end{split}
\end{equation}

\begin{equation}\label{[0,0,1,1,2,2]}
\begin{split}
\sum_{\boldsymbol{w}\in [0,0,1,1,2,2]}&N_q(0,\boldsymbol{w})+S_{[0,0,1,1,2,2]}=q^3\cdot {}_3F_2
\left(
\begin{array}{ccc|c}
\omega_2 & \overline{\omega}_3 & \overline{\omega}_6 \\
& \omega_6 & \omega_3 \\
\end{array}
\ \frac{1}{\lambda^6} \right)_q 
\end{split}
\end{equation}
\normalsize
\end{prop}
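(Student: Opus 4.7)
The plan is to treat each of the four identities (\ref{[0,0,0,0,0,0]})--(\ref{[0,0,1,1,2,2]}) by the same three-stage procedure: enumerate the six shift-representatives of the equivalence class, apply Corollary~\ref{sekikousiki} to $S_{[\boldsymbol{w}]}$, cancel the Gauss sums common to the resulting numerator and denominator, and identify what is left with a Greene's hypergeometric function, with the $\sum_{\boldsymbol{w}\in[\boldsymbol{w}]}N_q(0,\boldsymbol{w})$ contributions accounting for the finitely many characters $\chi$ at which the generic identification degenerates.

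First I enumerate the shifts $\boldsymbol{w}+(k,\dots,k)$, $k=0,1,\dots,5$, and read off $N_q$ from (\ref{hypersurface}). For $[0,0,0,0,0,0]$ the shift $k=0$ contributes $(q^{5}-1)/(q-1)$ while the others contribute $g(\omega^{kt})^{6}/q$ for $k=1,\dots,5$; for each of $[0,0,0,0,1,5]$ and $[0,0,0,0,2,4]$ exactly three shifts produce a tuple with no zero coordinate; for $[0,0,1,1,2,2]$ the surviving shifts are $k=1,2,3$. Second, applying Corollary~\ref{sekikousiki} transforms $S_{[\boldsymbol{w}]}$ into
$$S_{[\boldsymbol{w}]}=\frac{\prod_{i=1}^{5}g(\omega^{it})}{q-1}\sum_{j=0}^{q-2}\frac{\prod_{i=1}^{6}g(\omega^{w_it+j})}{\prod_{i=0}^{5}g(\omega^{it+j})}\,\omega^{6j}(\lambda),$$
the factor $\omega^{-6j}(6)$ produced by Corollary~\ref{sekikousiki} absorbing the $\omega^{6j}(6)$ hidden in $\omega^{6j}(6\lambda)$. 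After cancelling the Gauss sums common to the two products, the surviving ratios form a bounded product $g(\omega^{j})^{a_{0}}\prod_{k}g(\omega^{kt+j})^{\pm1}$ (at most five factors), and the reflection $g(\omega^{a})g(\omega^{-a})=\omega^{a}(-1)q$ pushes every remaining reciprocal Gauss sum into the numerator, at the cost of a sign character and a power of~$q$.

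Third, with $\chi=\omega^{j}$ each residual factor $g(A\chi)g(\overline{B\chi})/g(A\bar B)$ becomes the Jacobi sum $J(A\chi,\overline{B\chi})=q(B\chi)(-1)^{-1}\binom{A\chi}{B\chi}$, and the expression reshapes into $(q/(q-1))\sum_{\chi}\prod_{i}\binom{A_{i}\chi}{B_{i}\chi}\chi(1/\lambda^{6})$, which is precisely the Greene's ${}_{n+1}F_{n}$ appearing on the right-hand side of each identity. This identification is valid only on the locus where every $A_{i}\chi$ and $B_{i}\chi$ is non-trivial, since at the other characters the reflection $g(\chi)g(\bar\chi)=\chi(-1)q$ is replaced by $g(\epsilon)=-1$; evaluating $\binom{A\chi}{B\chi}$ directly at these boundary characters reproduces exactly the $N_q$ contributions from the first step, with shift $k$ matching the character $\chi=\omega^{-kt}$. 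Adding them back to $S_{[\boldsymbol{w}]}$ completes the character sum into a full Greene's hypergeometric function and yields the four identities.

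The main technical obstacle is this last matching of boundary characters with $N_q$ contributions. For (\ref{[0,0,1,1,2,2]}) the three denominator factors in Step~2 produce three boundary characters that must pair one-for-one with the three surviving shifts $k=1,2,3$ and combine with the correct signs and $q$-powers; the trivial class (\ref{[0,0,0,0,0,0]}) is cleanest, with five boundary terms accounted for by the five non-trivial shifts, while (\ref{[0,0,0,0,1,5]}) and (\ref{[0,0,0,0,2,4]}) require comparable tracking of three boundary terms each. Once this bookkeeping is verified, the four formulas follow immediately from the definition of Greene's hypergeometric function.
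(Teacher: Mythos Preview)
Your strategy is correct and uses the same ingredients as the paper's proof---Corollary~\ref{sekikousiki} (Hasse--Davenport), the reflection formula $g(\chi)g(\bar\chi)=q\chi(-1)$, and a case analysis at the characters $j\in t\mathbb{Z}$---only organized in the opposite direction: the paper expands $S_{[\boldsymbol{w}]}$ and the target ${}_{n+1}F_n$ separately to the common Gauss-sum expression $\tfrac{1}{q(q-1)}\sum_j \prod_i g(\omega^{w_it+j})\,g(\omega^{-6j})\,\omega^{6j}(6\lambda)$ and then compares, whereas you push $S_{[\boldsymbol{w}]}$ through Corollary~\ref{sekikousiki} first and then reshape what remains into the binomial product defining Greene's function. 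The boundary bookkeeping you flag is exactly what the paper handles by splitting off $j=0,t,\dots,5t$ before applying reflection; your enumeration of the surviving shifts and boundary characters is correct (for $[0,0,1,1,2,2]$ the three degenerate denominators occur at $j=t,2t,3t$, matching the shifts $k=1,2,3$, so your claimed correspondence $\chi=\omega^{-kt}$ should read $\chi=\omega^{kt}$, but this is only a sign in the indexing).
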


\begin{proof}
We give the proof of the identity (\ref{[0,0,0,0,0,0]}). First, we calculate $S_{[0,0,0,0,0,0]}$. We obtain
\small
\begin{equation*}
\begin{split}
S_{[0,0,0,0,0,0]}&=\frac{1}{q-1}\sum_{j=0}^{q-2}\frac{g(\omega^j)^6}{g(\omega^{6j})}\omega^{6j}(6\lambda) \\
&=\frac{1}{q-1}\left\{\frac{(-1)^6}{-1}+\frac{\sum_{i=1}^5g(\omega^{it})^6}{-1}+\sum_{\substack{j=0\\ j\neq 0,t,\ldots, 5t}}\frac{g(\omega^j)^6g(\omega^{-6j})}{q}\omega^{6j}(6\lambda) \right\} \\
\end{split}
\end{equation*}
\begin{equation*}
\begin{split}
&=\frac{1}{q-1}\left\{-1-\sum_{i=1}^5g(\omega^{it})^6+\frac{1}{q}\sum_{j=0}^{q-2}g(\omega^j)^6g(\omega^{-6j})\omega^{6j}(6\lambda)\right. \\
&\left.\qquad -\frac{1}{q}(-1)^6\cdot(-1)-\frac{1}{q}\sum_{i=1}^5g(\omega^{it})^6\cdot(-1) \right\} \\
&=\frac{1}{q-1}\left\{-\frac{q-1}{q}-\frac{q-1}{q}\sum_{i=1}^5g(\omega^{it})^6+\frac{1}{q}\sum_{j=0}^{q-2}g(\omega^j)^6g(\omega^{-6j})\omega^{6j}(6\lambda) \right\} \\
&=-\frac{1}{q}\ -\sum_{\boldsymbol{w}\in [0,0,0,0,0,0]}N_q(0,\boldsymbol{w})+\frac{1}{q(q-1)}\sum_{j=0}^{q-2}g(\omega^j)^6g(\omega^{-6j})\omega^{6j}(6\lambda). \\
\end{split}
\end{equation*}
\normalsize
Second, we calculate the hypergeometric function. We have
\small
\begin{equation*}
\begin{split}
&{}_5F_4
\left(
\begin{array}{ccccc|c}
\omega_6 & \omega_3 & \omega_2 & \overline{\omega}_3 & \overline{\omega}_6 \\
& \epsilon & \epsilon & \epsilon & \epsilon \\
\end{array}
\ \frac{1}{\lambda^6} \right)_q \\
&=\frac{q}{q-1}\sum_{j=0}^{q-2}\frac{\omega^j(-1)}{q^5}J(\omega^{t+j},\overline{\omega^j})J(\omega^{2t+j},\overline{\omega^j})J(\omega^{3t+j},\overline{\omega^j})J(\omega^{4t+j},\overline{\omega^j})J(\omega^{5t+j},\overline{\omega^j})\omega^j\left(\frac{1}{\lambda^6}\right) \\
&=\frac{1}{q^4(q-1)}\sum_{j=0}^{q-2}\frac{g(\omega^{-j})^5\omega^j\left(\frac{-1}{\lambda^6}\right)\prod_{i=1}^5g(\omega^{it+j})}{\prod_{i=1}^5g(\omega^{it})} \\
\end{split}
\end{equation*}
\begin{equation*}
\begin{split}
&=\frac{1}{q^4(q-1)}\sum_{j=0}^{q-2}\frac{g(\omega^{6j})\omega^{-6j}(6)g(\omega^{-j})^5\omega^j\left(\frac{-1}{\lambda^6}\right)}{g(\omega^j)} \\
&=\frac{1}{q^4(q-1)}\left\{\frac{-1\cdot(-1)^5}{-1}+\sum_{j=1}^{q-2}\frac{g(\omega^{6j})g(\omega^{-j})^6\omega^{-6j}(-6\lambda)}{q\cdot \omega^j(-1)} \right\} \\
&=\frac{1}{q^4(q-1)}\left\{-1-\frac{1}{q}(-1)\cdot(-1)^6+\frac{1}{q}\sum_{j=0}^{q-2}g(\omega^{6j})g(\omega^{-j})^6\omega^{-6j}(6\lambda) \right\} \\
&=\frac{-1}{q^5}+\frac{1}{q^5(q-1)}\sum_{j=0}^{q-2}g(\omega^{6j})g(\omega^{-j})^6\omega^{-6j}(6\lambda) \\
&=\frac{-1}{q^5}+\frac{1}{q^5(q-1)}\sum_{j=0}^{q-2}g(\omega^{-6j})g(\omega^{j})^6\omega^{6j}(6\lambda).
\end{split}
\end{equation*}
\normalsize
Here, the first equality follows from the definition of the hypergeometric function. The second equality follows from the definition of the Jacobi sum. The third equality is obtained by using Corollary \ref{sekikousiki}. The fourth equality is obtained by using the identity $g(\omega^j)g(\omega^{-j})=q\cdot \omega^j(-1)$ for $j\neq 0$.

This completes the proof of the identity (\ref{[0,0,0,0,0,0]}). Similarly, we can show the identities (\ref{[0,0,0,0,1,5]}), (\ref{[0,0,0,0,2,4]}) and (\ref{[0,0,1,1,2,2]}).
\end{proof}

\begin{prop}\label{Jacobi coefficient}
We obtain the following identities:
\begin{equation}\label{[0,0,0,0,3,3]}
\begin{split}
\sum&_{\boldsymbol{w}\in [0,0,0,0,3,3]}N_q(0,\boldsymbol{w})+S_{[0,0,0,0,3,3]}\\
&=-q^3 \omega_6(-1)J(\omega_2, \overline{\omega}_3, \overline{\omega}_6)\cdot {}_4F_3
\left(
\begin{array}{cccc|c}
\omega_6 & \overline{\omega}_6 & \overline{\omega}_3 & \omega_3 \\
& \epsilon & \epsilon & \omega_2 \\
\end{array}
\ \frac{1}{\lambda^6} \right)_q 
\end{split}
\end{equation}

\begin{equation}\label{[0,0,2,2,4,4]}
\begin{split}
\sum_{\boldsymbol{w}\in [0,0,2,2,4,4]}&N_q(0,\boldsymbol{w})+S_{[0,0,2,2,4,4]}\\
&=-q^2J(\omega_6, \omega_6)J(\omega_6,\omega_3,\omega_2)\cdot {}_3F_2
\left(
\begin{array}{ccc|c}
\omega_6 & \omega_2 & \overline{\omega}_6 \\
& \omega_3 & \overline{\omega}_3 \\
\end{array}
\ \frac{1}{\lambda^6} \right)_q
\end{split}
\end{equation}

\begin{equation}\label{[0,0,0,2,2,2]}
\begin{split}
\sum_{\boldsymbol{w}\in [0,0,0,2,2,2]}&N_q(0,\boldsymbol{w})+S_{[0,0,0,2,2,2]} \\
&=q^3\omega_6(-1) J(\omega_6,\omega_3,\omega_2)\cdot {}_4F_3
\left(
\begin{array}{cccc|c}
\omega_6 & \omega_2 & \overline{\omega}_3 & \overline{\omega}_6 \\
& \epsilon & \omega_3 & \omega_3 \\
\end{array}
\ \frac{1}{\lambda^6} \right)_q
\end{split}
\end{equation}
\end{prop}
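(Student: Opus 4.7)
The plan is to follow the template of Proposition \ref{saisyo} and handle each of the three identities in parallel. For a given class $[\boldsymbol{w}]$, one writes $S_{[\boldsymbol{w}]}$ as a sum over $j \in \{0, 1, \ldots, q-2\}$, separating the six exceptional values $j \in \{0, t, 2t, 3t, 4t, 5t\}$ (where $\omega^{6j} = \epsilon$ and $g(\omega^{6j}) = -1$) from the remaining generic values, for which one uses $g(\omega^{6j})^{-1} = \omega^{6j}(-1)\, g(\omega^{-6j})/q$. As in the proof of Proposition \ref{saisyo}, the exceptional contributions combine with $\sum_{\boldsymbol{w} \in [\boldsymbol{w}]} N_q(0, \boldsymbol{w})$, while the generic sum is to be matched with the hypergeometric function on the right-hand side. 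To carry out the matching, I would expand that hypergeometric function using Greene's definition together with $\binom{A}{B} = B(-1)\, g(A) g(\overline B)/(q\, g(A\overline B))$ (valid when $A \overline B \neq \epsilon$), and then invoke Corollary \ref{sekikousiki} to rewrite $\prod_{i=0}^{5} g(\omega^{it+j}) = \omega^{-6j}(6)\, g(\omega^{6j}) \prod_{i=1}^{5} g(\omega^{it})$.

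The Jacobi-sum prefactors on the right-hand side arise from two structural features that distinguish these classes from those in Proposition \ref{saisyo}. First, each repeated entry in $[\boldsymbol{w}]$ (the two $3$'s in $[0,0,0,0,3,3]$, the paired $2$'s and $4$'s in $[0,0,2,2,4,4]$, or the triple $2$'s in $[0,0,0,2,2,2]$) produces a Gauss-sum power in $S_{[\boldsymbol{w}]}$ with no direct analogue among Greene's shifted binomials, so part of it must be extracted as a $j$-independent factor. Second, the appearance of non-trivial bottom characters in the target hypergeometric function (such as $\omega_2$ in the last column of the ${}_4F_3$ of \eqref{[0,0,0,0,3,3]}, or $\omega_3$ and $\overline\omega_3$ in the denominators of the ${}_3F_2$ of \eqref{[0,0,2,2,4,4]}) introduces Gauss sums $g(B \omega^j)$ with shifts different from $\omega^j$, forcing additional compensation factors. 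Combining these two sources with the standard identities $g(\chi_1) g(\chi_2) = J(\chi_1, \chi_2)\, g(\chi_1 \chi_2)$ for $\chi_1 \chi_2 \neq \epsilon$ and $g(\chi_1) g(\chi_2) g(\chi_3) = J(\chi_1, \chi_2, \chi_3)\, g(\chi_1 \chi_2 \chi_3)$ for $\chi_1 \chi_2 \chi_3 \neq \epsilon$ should produce exactly the prefactors $J(\omega_2, \overline\omega_3, \overline\omega_6)$, $J(\omega_6, \omega_6) J(\omega_6, \omega_3, \omega_2)$, and $J(\omega_6, \omega_3, \omega_2)$ appearing in \eqref{[0,0,0,0,3,3]}, \eqref{[0,0,2,2,4,4]}, and \eqref{[0,0,0,2,2,2]}, respectively.

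The main obstacle will be the delicate bookkeeping of signs and degenerate cases. One must track the $\omega^j(-1)$-factors carefully, apply $g(\chi) g(\overline \chi) = q \chi(-1)$ only when $\chi \neq \epsilon$, and handle correctly those Jacobi sums whose three characters multiply to $\epsilon$ (for instance $J(\omega_2, \overline\omega_3, \overline\omega_6)$ and $J(\omega_6, \omega_3, \omega_2)$) via the degenerate identity $J(\chi_1, \chi_2, \chi_3) = -g(\chi_1) g(\chi_2) g(\chi_3)/q$, which holds when $\chi_1\chi_2\chi_3 = \epsilon$ and no pairwise product is trivial, rather than by the generic triple formula. Matching the six exceptional $j$-contributions against the constant terms produced by the hypergeometric expansion must then be verified term by term. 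Once this bookkeeping is under control, the three identities follow by direct calculation parallel to the proof of Proposition \ref{saisyo}.
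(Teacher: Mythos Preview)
Your proposal is correct and follows essentially the same approach as the paper. The paper proves only the identity \eqref{[0,0,0,0,3,3]} in detail (stating that \eqref{[0,0,2,2,4,4]} and \eqref{[0,0,0,2,2,2]} are analogous), and does so exactly as you describe: it first computes $S_{[0,0,0,0,3,3]}$ by splitting off the six exceptional indices $j\in\{0,t,\ldots,5t\}$, then independently expands the ${}_4F_3$ via Greene's definition, converts the Jacobi-sum binomials to Gauss sums, applies Corollary~\ref{sekikousiki}, and finally matches the two expressions using the relation $q^4/J(\omega_3,\omega_2)=-q^3\omega_6(-1)J(\omega_2,\overline{\omega}_3,\overline{\omega}_6)$, which is your degenerate triple-Jacobi identity in disguise.
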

\begin{proof}
We give the proof of only the identity (\ref{[0,0,0,0,3,3]}) since we can show the identities (\ref{[0,0,2,2,4,4]}) and (\ref{[0,0,0,2,2,2]}) similarly.  From a calculation similar to $S_{[0,0,0,0,0,0]}$, we obtain
\small
\begin{equation*}
\begin{split}
S&_{[0,0,0,0,3,3]}=\frac{1}{q-1}\sum_{j=0}^{q-2}\frac{g(\omega^j)^4g(\omega^{3t+j})^2}{g(\omega^{6j})}\omega^{6j}(6\lambda) \\
&=\frac{1}{q-1}\cdot \frac{(-1)^4g(\omega^{3t})^2}{-1}+\frac{1}{q-1}\frac{\sum_{i=1}^5g(\omega^{it})^4g(\omega^{3+it})^2}{-1} \\
&+\frac{1}{q-1}\sum_{\substack{j=0\\ j\neq 0,t,\ldots, 5t}}^{q-2}\frac{g(\omega^j)^4g(\omega^{3t+j})^2g(\omega^{-6j})\omega^{6j}(6\lambda)}{q} \\
&=-\frac{1}{q-1}g(\omega^{3t})^2-\frac{1}{q-1}\sum_{i=1}^5g(\omega^{it})^4g(\omega^{3t+it})^2 \\
&+\frac{1}{q(q-1)}\left(-(-1)^4g(\omega^{3t})^2\cdot(-1)-\sum_{i=1}^5g(\omega^{it})^4g(\omega^{3t+it})^2\cdot(-1)\right. \\
&\left.+\sum_{j=0}^{q-2}g(\omega^j)^4g(\omega^{3t+j})^2g(\omega^{-6j})\omega^{6j}(6\lambda) \right) \\
&=-\frac{1}{q-1}g(\omega^{3t})^2-\frac{1}{q-1}\sum_{i=1}^5g(\omega^{it})^4g(\omega^{3t+it})^2+\frac{1}{q(q-1)}g(\omega^{3t})^2 \\
&+\frac{1}{q(q-1)}\sum_{i=1}^5g(\omega^{it})^4g(\omega^{3t+it})^2+\frac{1}{q(q-1)}\sum_{j=0}^{q-2}g(\omega^j)^4g(\omega^{3t+j})^2g(\omega^{-6j})\omega^{6j}(6\lambda) \\
&=-\frac{1}{q}g(\omega^{3t})^2-\frac{1}{q}\sum_{i=1}^5g(\omega^{it})^4g(\omega^{3t+it})^2+\frac{1}{q(q-1)}\sum_{j=0}^{q-2}g(\omega^j)^4g(\omega^{3t+j})^2g(\omega^{-6j})\omega^{6j}(6\lambda) \\
&=-\omega^t(-1)-q-N_q(0,[0,0,0,0,3,3]) 
+\frac{1}{q(q-1)}\sum_{j=0}^{q-2}g(\omega^j)^4g(\omega^{3t+j})^2g(\omega^{-6j})\omega^{6j}(6\lambda).\\
\end{split}
\end{equation*}
\normalsize
Next, we calculate the hypergeometric function. By a similar way to the proof of the identity (\ref{[0,0,0,0,0,0]}) in Proposition \ref{saisyo}, we obtain
\small
\begin{equation*}
\begin{split}
{}&_4F_3
\left(
\begin{array}{cccc|c}
\omega_6 & \overline{\omega}_6 & \overline{\omega}_3 & \omega_3 \\
& \epsilon & \epsilon & \omega_2 \\
\end{array}
\ \frac{1}{\lambda^6} \right)_q \\
&=\frac{q}{q-1}\sum_{j=0}^{q-2}\frac{\omega^t(-1)}{q^4}J(\omega^{t+j},\overline{\omega^j})J(\omega^{5t+j},\overline{\omega^j})J(\omega^{4t+j},\overline{\omega^j})J(\omega^{2t+j},\overline{\omega^{3t+j}})\omega^j\left(\frac{1}{\lambda^6}\right) \\
\end{split}
\end{equation*}
\begin{equation*}
\begin{split}
&=\frac{\omega^t(-1)}{q^3(q-1)}\sum_{j=0}^{q-2}\frac{g(\omega^{t+j})g(\omega^{5t+j})g(\omega^{4t+j})g(\omega^{2t+j})g(\overline{\omega^j})^3g(\overline{\omega^{3t+j}})\omega^{-6j}(\lambda)}{g(\omega^t)g(\omega^{5t})g(\omega^{4t})g(\omega^{-t})} \\
&=\frac{\omega^t(-1)J(\omega^{2t},\omega^{3t})}{q^3(q-1)}\sum_{j=0}^{q-2}\frac{g(\omega^{6j})g(
\omega^{-j})^3g(\omega^{-3t-j})\omega^{-6j}(6\lambda)}{g(\omega^j)g(\omega^{3t+j})} \\
\end{split}
\end{equation*}
\begin{equation*}
\begin{split}
&=\frac{\omega_6(-1)J(\omega_3,\omega_2)}{q^3(q-1)}\left(\frac{(-1)(-1)^3g(\omega^{3t})}{(-1)g(\omega^{3t})}+\frac{(-1)g(\omega^{-3t})^3(-1)}{g(\omega^{3t})(-1)}\right. \\
&\left.+\sum_{\substack{j=0\\ j\neq 0,3t}}^{q-2}\frac{g(\omega^{6j})g(\omega^{-j})^4g(\omega^{-3t-j})^2\omega^{-6j}(6\lambda)}{q^2\omega^{-j-3t-j}(-1)}\right) \\
&=\frac{\omega_6(-1)J(\omega_3,\omega_2)}{q^3(q-1)}\left(-1-g(\omega^{3t})^2+\frac{1}{q}+\frac{g(\omega^{3t})^2}{q}+\sum_{j=0}^{q-2}\frac{g(\omega^{6j})g(\omega^{-j})^4g(\omega^{-3t-j})^2\omega^{-6j}(6\lambda)}{q^2\omega^t(-1)} \right) \\
&=\frac{\omega_6(-1)J(\omega_3,\omega_2)}{q^3(q-1)}\left(\frac{-(q-1)}{q}-\frac{(q-1)g(\omega^{3t})^2}{q}+\sum_{j=0}^{q-2}\frac{g(\omega^{6j})g(\omega^{-j})^4g(\omega^{-3t-j})^2\omega^{-6j}(6\lambda)}{q^2\omega^t(-1)} \right) \\
&=\frac{\omega_6(-1)J(\omega_3,\omega_2)}{q^3}\left(\frac{-1}{q}-\frac{g(\omega^{3t})^2}{q}\right)+\frac{J(\omega_3,\omega_2)}{q^5(q-1)}\sum_{j=0}^{q-2}g(\omega^{6j})g(\omega^{-j})^4g(\omega^{-3t-j})^2\omega^{-6j}(6\lambda) \\
&=\frac{J(\omega_3,\omega_2)}{q^3}\left(\frac{-\omega_6(-1)}{q}-1 \right)+\frac{J(\omega_3,\omega_2)}{q^5(q-1)}\sum_{j=0}^{q-2}g(\omega^{6j})g(\omega^{-j})^4g(\omega^{-3t-j})^2\omega^{-6j}(6\lambda). \\
\end{split}
\end{equation*}
\normalsize
Then, the identity (\ref{[0,0,0,0,3,3]}) follows from the identity $\frac{q^4}{J(\omega_3,\omega_2)}=-q^3\omega_6(-1)J(\omega_2,\overline{\omega}_3,\overline{\omega}_6)$.
\end{proof}

\begin{prop}\label{60}
We obtain
\begin{equation*}
\begin{split}
\sum_{\boldsymbol{w}\in [0,0,0,1,1,4]}&N_q(0,\boldsymbol{w})+S_{[0,0,0,1,1,4]}+\sum_{\boldsymbol{w}\in [0,0,0,2,5,5]}N_q(0,\boldsymbol{w})+S_{[0,0,0,2,5,5]}\\
&=q^2 \omega_6(-1)J(\omega_6, \omega_6, \overline{\omega}_3)J(\omega_2,\overline{\omega}_3,\overline{\omega}_6)\cdot {}_3F_2
\left(
\begin{array}{ccc|c}
\omega_6 & \overline{\omega}_3 & \omega_2 \\
& \epsilon & \overline{\omega}_6 \\
\end{array}
\ \frac{1}{\lambda^6} \right)_q\\
&\qquad+q^2 J(\omega_3,\omega_3,\omega_3)J(\omega_2, \overline{\omega_3}, \overline{\omega}_6)\cdot {}_3F_2
\left(
\begin{array}{ccc|c}
\omega_3 & \overline{\omega}_6 & \omega_2 \\
& \epsilon & \omega_6 \\
\end{array}
\ \frac{1}{\lambda^6} \right)_q .
\end{split}
\end{equation*}
\end{prop}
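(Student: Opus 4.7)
The strategy follows the same template as Propositions \ref{saisyo} and \ref{Jacobi coefficient}, but the orbits $[0,0,0,1,1,4]$ and $[0,0,0,2,5,5]$ must be treated jointly: the exceptional $j$-terms arising from each individual $S_{[\boldsymbol{w}]}$ do not combine with $\sum_{\boldsymbol{w}}N_q(0,\boldsymbol{w})$ alone to produce the stated hypergeometric, and only after pairing the two orbits does the leftover data collapse.

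First I would expand
\[
S_{[0,0,0,1,1,4]}=\frac{1}{q-1}\sum_{j=0}^{q-2}\frac{g(\omega^{j})^{3}g(\omega^{t+j})^{2}g(\omega^{4t+j})}{g(\omega^{6j})}\omega^{6j}(6\lambda),
\]
and $S_{[0,0,0,2,5,5]}$ analogously. I would split each sum into the six exceptional indices $j\in\{0,t,2t,\ldots,5t\}$ (on which $g(\omega^{6j})=-1$) and the generic range (on which $g(\omega^{6j})g(\omega^{-6j})=q$ allows the denominator to be turned into a numerator factor $g(\omega^{-6j})/q$). For each orbit this yields a principal sum over the full range of $j$ of a product of six Gauss sums together with $g(\omega^{-6j})\omega^{6j}(6\lambda)$, plus correction terms that reabsorb $\sum_{\boldsymbol{w}}N_q(0,\boldsymbol{w})$ in the same bookkeeping as in Proposition \ref{Jacobi coefficient}, plus residual boundary contributions.

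Next, I would expand each of the two ${}_3F_2$'s on the right-hand side from the definition of Greene's hypergeometric function, replace every normalized Jacobi sum with an expression in Gauss sums via $J(A,\overline{B})=g(A)g(\overline{B})/g(A\overline{B})$ (with the usual adjustment when $A\overline{B}=\epsilon$), and apply Corollary \ref{sekikousiki} to reintroduce $g(\omega^{6j})$ into the numerator. The explicit constants $q^{2}\omega_{6}(-1)J(\omega_{6},\omega_{6},\overline{\omega}_{3})J(\omega_{2},\overline{\omega}_{3},\overline{\omega}_{6})$ and $q^{2}J(\omega_{3},\omega_{3},\omega_{3})J(\omega_{2},\overline{\omega}_{3},\overline{\omega}_{6})$ enter precisely as the ratios needed to rebuild $\prod_{i=1}^{5}g(\omega^{it})$, so that the resulting generic-$j$ sums match the principal sums produced in the previous step.

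The main obstacle is the residual boundary terms. After the above reductions they take the shape
\[
\sum_{j=0}^{q-2}g(\omega^{a+j})g(\omega^{b-j})\omega^{j}(-1)\omega^{6j}(\lambda)
\]
for various choices of $a,b$ that are multiples of $t$. Lemma \ref{turai} evaluates each such sum in closed form as $(q-1)g(\omega^{a+b})\omega^{b}(-1)\omega^{-(a+b)}(1-\lambda^{6})$, and the sums coming from $[0,0,0,1,1,4]$ and $[0,0,0,2,5,5]$ are related by $w_{i}\mapsto -w_{i}$, so their closed-form evaluations pair up and cancel. Verifying this cancellation cleanly---tracking signs, the factors $\omega_{6}(-1)$, and the normalizations introduced by the Hasse--Davenport product relation---is where the delicate work lies, but no new ingredient beyond Corollary \ref{sekikousiki} and Lemma \ref{turai} should be required.
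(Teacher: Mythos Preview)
Your overall strategy matches the paper's: treat each orbit as in Proposition \ref{Jacobi coefficient}, obtain a hypergeometric term plus residual boundary data, and then observe that the residuals from the two orbits (which are related by $w_i\mapsto -w_i$) cancel. That is exactly what the paper does.

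However, your identification of the residual terms is wrong, and Lemma \ref{turai} plays no role here. Following the template of Proposition \ref{Jacobi coefficient} precisely, the mismatch between the exceptional-$j$ contributions on the $S_{[\boldsymbol{w}]}$ side and on the ${}_3F_2$ side leaves not a $j$-sum of the shape $\sum_{j}g(\omega^{a+j})g(\omega^{b-j})\omega^{j}(-1)\omega^{6j}(\lambda)$, but a fixed finite combination of Gauss sums at the six special indices $j=0,t,\ldots,5t$. The paper simply records these explicitly: for $[0,0,0,1,1,4]$ the leftover is
\[
\tfrac{1}{q}g(\omega^t)^2g(\omega^{4t})-\tfrac{1}{q}g(\omega^{5t})^3g(\omega^{3t})+\omega^t(-1)g(\omega^{2t})^3-\tfrac{1}{q}g(\omega^{2t})g(\omega^{5t})^2+\tfrac{1}{q}g(\omega^t)^3g(\omega^{3t})-\omega^t(-1)g(\omega^{4t})^3,
\]
and the computation for $[0,0,0,2,5,5]$ produces exactly the negative of this expression, term by term. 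The cancellation is therefore by direct inspection of these constants, not by evaluating a further character sum. Lemma \ref{turai} is used only later, in Proposition \ref{saigo}, where a genuine $j$-sum of that two-Gauss-sum type survives.
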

\begin{proof}
By an argument similar to that in Proposition \ref{Jacobi coefficient}, we have
\small
\begin{equation*}
\begin{split}
\sum_{\boldsymbol{w}\in [0,0,0,1,1,4]}&N_q(0,\boldsymbol{w})+S_{[0,0,0,1,1,4]} \\
&=q^2 \cdot J(\omega_3,\omega_3,\omega_3)J(\omega_2, \overline{\omega_3}, \overline{\omega}_6)\cdot {}_3F_2
\left(
\begin{array}{ccc|c}
\omega_3 & \overline{\omega}_6 & \omega_2 \\
& \epsilon & \omega_6 \\
\end{array}
\ \frac{1}{\lambda^6} \right)_q \\
&+\frac{1}{q}g(\omega^t)^2g(\omega^{4t})-\frac{1}{q}g(\omega^{5t})^3g(\omega^{3t})+\omega^t(-1)g(\omega^{2t})^3\\
&-\frac{1}{q}g(\omega^{2t})g(\omega^{5t})^2+\frac{1}{q}g(\omega^t)^3g(\omega^{3t})-\omega^t(-1)g(\omega^{4t})^3.
\end{split}
\end{equation*}
and

\begin{equation*}
\begin{split}
\sum_{\boldsymbol{w}\in [0,0,0,2,5,5]}&N_q(0,\boldsymbol{w})+S_{[0,0,0,2,5,5]}\\
&=q^2 \omega_6(-1)J(\omega_6, \omega_6, \overline{\omega}_3)J(\omega_2,\overline{\omega}_3,\overline{\omega}_6)\cdot {}_3F_2
\left(
\begin{array}{ccc|c}
\omega_6 & \overline{\omega}_3 & \omega_2 \\
& \epsilon & \overline{\omega}_6 \\
\end{array}
\ \frac{1}{\lambda^6} \right)_q\\
&+\frac{1}{q}g(\omega^{5t})^2g(\omega^{2t})+\omega^t(-1)g(\omega^{4t})^3-\frac{1}{q}g(\omega^t)^3g(\omega^{3t}) \\
&-\frac{1}{q}g(\omega^t)^2g(\omega^{4t})-\omega^t(-1)g(\omega^{2t})^3+\frac{1}{q}g(\omega^{5t})^3g(\omega^{3t}).
\end{split}
\end{equation*}
This completes the proof of Proposition \ref{60}.
\end{proof}

In the same fashion as the proof of \cite[Proposition 4.6]{case4}, we can obtain the following results by using Lemma \ref{sekikousiki}.
\begin{prop}
We obtain the following identities:
\begin{equation}
\begin{split}
\sum_{\boldsymbol{w}\in [0,0,0,1,2,3]}&N_q(0,\boldsymbol{w})+S_{[0,0,0,1,2,3]}=-q^2J(\omega_6, \omega_3,\omega_2)\cdot {}_2F_1
\left(
\begin{array}{cc|c}
\omega_6 & \omega_3 \\
& \epsilon \\
\end{array}
\ \frac{1}{\lambda^6} \right)_q 
\end{split}
\end{equation}

\begin{equation}
\begin{split}
\sum_{\boldsymbol{w}\in [0,0,0,3,4,5]}&N_q(0,\boldsymbol{w})+S_{[0,0,0,3,4,5]}=-q^2 J(\omega_2, \overline{\omega}_3,\overline{\omega}_6)\cdot {}_2F_1
\left(
\begin{array}{cc|c}
\overline{\omega}_3 & \overline{\omega}_6 \\
& \epsilon \\
\end{array}
\ \frac{1}{\lambda^6} \right)_q 
\end{split}
\end{equation}

\begin{equation}
\begin{split}
\sum_{\boldsymbol{w}\in [0,0,1,3,3,5]}&N_q(0,\boldsymbol{w})+S_{[0,0,1,3,3,5]}=
-q^2J(\omega_6, \omega_3,\omega_2)\cdot {}_2F_1
\left(
\begin{array}{cc|c}
\omega_3 & \overline{\omega}_3 \\
& \omega_2 \\
\end{array}
\ \frac{1}{\lambda^6} \right)_q
\end{split}
\end{equation}

\begin{equation}
\begin{split}
\sum_{\boldsymbol{w}\in [0,0,2,2,3,5]}&N_q(0,\boldsymbol{w})+S_{[0,0,2,2,3,5]} =
-q^2J(\omega_6,\omega_3,\omega_2)\cdot {}_2F_1
\left(
\begin{array}{cc|c}
\omega_3 & \overline{\omega}_6 \\
& \overline{\omega}_3 \\
\end{array}
\ \frac{1}{\lambda^6} \right)_q
\end{split}
\end{equation}

\end{prop}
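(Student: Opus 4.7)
The plan is to treat the four identities uniformly, in the same spirit as the degree-four argument in \cite[Proposition 4.6]{case4}. For each equivalence class $[\boldsymbol{w}]$ appearing in the statement, I would begin from the definition
$$S_{[\boldsymbol{w}]}=\frac{1}{q-1}\sum_{j=0}^{q-2}\frac{\prod_{i=1}^{6} g(\omega^{w_it+j})}{g(\omega^{6j})}\omega^{6j}(6\lambda),$$
separate off the six special exponents $j=0,t,\ldots,5t$ where $\omega^{6j}=\epsilon$ (so that $g(\omega^{6j})=-1$), and on the complementary range rewrite $1/g(\omega^{6j})=g(\omega^{-6j})/q$ using $g(\omega^{6j})g(\omega^{-6j})=q$.

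Next, I would invoke Corollary~\ref{sekikousiki} (Hasse-Davenport for $m=6$) to expand $g(\omega^{-6j})=\prod_{i=0}^{5}g(\omega^{it-j})/\bigl(\omega^{6j}(6)\prod_{i=1}^{5}g(\omega^{it})\bigr)$. Four of the six newly introduced Gauss sums cancel against four of the six factors $g(\omega^{w_it+j})$ in the numerator, leaving exactly a product $g(\omega^{a+j})g(\omega^{b-j})$ with $a+b\in t\mathbb{Z}$, accompanied by a sign $\omega^j(-1)$ and by $\omega^{6j}(\lambda)$. This is precisely the input required by Lemma~\ref{turai}, which evaluates the inner sum in $j$ as $(q-1)g(\omega^{a+b})\omega^b(-1)\omega^{-(a+b)}(1-\lambda^6)$, multiplied by the Gauss-sum constants produced along the way.

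After this evaluation the main contribution is a scalar multiple of $\omega^{-(a+b)}(1-\lambda^{6})$, which I would match against Greene's $_2F_1$ via its $n=1$ definition
$${}_2F_1\!\left(\begin{array}{cc|c}A_0 & A_1\\ & B_1\end{array}\ x\right)_q=\epsilon(x)\frac{A_1B_1(-1)}{q}\sum_{y\in\mathbb{F}_q}A_1(y)\overline{A_1}B_1(1-y)\overline{A_0}(1-xy),$$
choosing $A_0, A_1, B_1$ so that $\overline{A_0}(1-xy)$ with $x=\lambda^{-6}$ reproduces the factor $\omega^{-(a+b)}(1-\lambda^6)$, and reassembling the leftover Gauss-sum constants into the advertised Jacobi sum $J(\omega_6,\omega_3,\omega_2)$ or $J(\omega_2,\overline{\omega}_3,\overline{\omega}_6)$ via $g(A)g(\overline{A})=qA(-1)$ and $J(A,B)=g(A)g(B)/g(AB)$ for $AB\neq \epsilon$. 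The six special terms removed at the outset then combine with $\sum_{\boldsymbol{w}\in[\boldsymbol{w}]}N_q(0,\boldsymbol{w})$, computed from (\ref{hypersurface}) summed over the $S_6$-orbit, and must cancel exactly.

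The principal obstacle is the bookkeeping at the boundary exponents $j\equiv 0\pmod{t}$: depending on which entries of $\boldsymbol{w}$ already lie in $t\mathbb{Z}$, several Gauss-sum factors in the numerator of $S_{[\boldsymbol{w}]}$ collapse to $-1$ at those exponents, and at the same time $N_q(0,\boldsymbol{w})$ is non-zero only for the permutations of $\boldsymbol{w}$ with no zero entries; the cancellation of the boundary contribution against $\sum N_q(0,\boldsymbol{w})$ therefore has to be verified class by class using the orbit sizes $120,120,180,180$ recorded above. This is exactly the mechanical verification performed in the degree-four setting of \cite[Proposition 4.6]{case4}, and the four identities here follow by the same template, with only the Jacobi-sum multipliers and the parameters of the resulting $_2F_1$ changing.
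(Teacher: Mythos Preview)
Your plan contains a genuine gap. For each of the four classes here, after applying Corollary~\ref{sekikousiki} and pairing factors via $g(\omega^a)g(\omega^{-a})=q\omega^a(-1)$, you are \emph{not} left with a single product $g(\omega^{a+j})g(\omega^{b-j})$. Take $[0,0,0,1,2,3]$: the six numerator factors are $g(\omega^j)^3g(\omega^{t+j})g(\omega^{2t+j})g(\omega^{3t+j})$, and the six Hasse--Davenport factors are $g(\omega^{-j})g(\omega^{t-j})\cdots g(\omega^{5t-j})$. Exactly four pairs collapse, namely $g(\omega^j)g(\omega^{-j})$, $g(\omega^{t+j})g(\omega^{5t-j})$, $g(\omega^{2t+j})g(\omega^{4t-j})$, $g(\omega^{3t+j})g(\omega^{3t-j})$, leaving \emph{four} $j$-dependent Gauss sums $g(\omega^j)^2g(\omega^{t-j})g(\omega^{2t-j})$, not two. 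The same count holds for the other three classes. Lemma~\ref{turai} therefore does not apply; indeed it cannot, since its output is a scalar multiple of a single character value $\omega^{-(a+b)}(1-\lambda^6)$, whereas the right-hand sides here are genuine ${}_2F_1$'s, i.e.\ nontrivial sums over $y$. (Your scheme is exactly what works for $[0,0,1,2,4,5]$ in Proposition~\ref{saigo}, where only two factors survive and the answer \emph{is} the scalar $q^2\omega_2(1-\lambda^6)$; you have conflated that case with the present one.)

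What the paper does instead is recognize that the four surviving Gauss sums, after dividing by the constant $\prod_{i=1}^{5}g(\omega^{it})$, organize into two normalized Jacobi sums in $\omega^j$ and hence match the character-sum form of a ${}_2F_1$ evaluated at $\lambda^6$; the Jacobi-sum coefficient $J(\omega_6,\omega_3,\omega_2)$ (or its conjugate) comes out as the leftover constant. This is the direct analogue of \cite[Proposition~4.6]{case4}. A second step is then needed: the substitution $y\mapsto y/\lambda^6$ in the $n=1$ integral form of ${}_2F_1$ converts the argument from $\lambda^6$ to $1/\lambda^6$ and simultaneously changes the parameters to those in the statement. Your proposal omits this transformation entirely.
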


\begin{proof}
In the same way as the proof of \cite[Proposition 4.6]{case4}, we have
\small
\begin{equation*}
\begin{split}
\sum_{\boldsymbol{w}\in [0,0,0,1,2,3]}&N_q(0,\boldsymbol{w})+S_{[0,0,0,1,2,3]}\\
&=-q^2\omega^t(-1)J(\omega^t, \omega^{2t},\omega^{3t})\cdot\frac{\omega^t(-1)}{q}\sum_{y\in \mathbb{F}_q}\omega^{2t}(y)\omega^{5t}(1-y)\omega^{4t}(1-\lambda^6y) \\
&\left(=-q^2\omega_6(-1)J(\omega_6, \omega_3,\omega_2)\cdot {}_2F_1
\left(
\begin{array}{cc|c}
\omega_3 & \omega_3 \\
& \omega_6 \\
\end{array}
\ \lambda^6 \right)_q \right).
\end{split}
\end{equation*}
\normalsize
Then, the substitution $y\mapsto y/\lambda^6$ implies
\begin{equation*}
\begin{split}
&\sum_{\boldsymbol{w}\in [0,0,0,1,2,3]}N_q(0,\boldsymbol{w})+S_{[0,0,0,1,2,3]}\\
&=-q^2\omega^t(-1)J(\omega^t, \omega^{2t},\omega^{3t})\cdot\frac{\omega^t(-1)}{q}\sum_{y\in \mathbb{F}_q}\omega^{2t}\left(\frac{y}{\lambda^6}\right)\omega^{5t}\left(1-\frac{y}{\lambda^6}\right)\omega^{4t}(1-y) \\
&=-q^2\omega^t(-1)J(\omega^t, \omega^{2t},\omega^{3t})\cdot\frac{\omega^t(-1)}{q}\sum_{y\in \mathbb{F}_q}\omega^{2t}(y)\omega^{4t}(1-y)\omega^{5t}\left(1-\frac{y}{\lambda^6}\right) \\
&=-q^2J(\omega_6, \omega_3,\omega_2)\cdot {}_2F_1
\left(
\begin{array}{cc|c}
\omega_6 & \omega_3 \\
& \epsilon \\
\end{array}
\ \frac{1}{\lambda^6} \right)_q .
\end{split}
\end{equation*}
\end{proof}

Finally, we calculate in case of $[\boldsymbol{w}]=[0,0,1,2,4,5]$. 
\begin{prop}\label{saigo}
We obtain
\begin{equation*}
\begin{split}
\sum_{\boldsymbol{w}\in [0,0,1,2,4,5]}&N_q(0,\boldsymbol{w})+S_{[0,0,1,2,4,5]}=q^2\omega_2(1-\lambda^6).
\end{split}
\end{equation*}
\end{prop}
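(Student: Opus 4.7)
The plan is to evaluate $\sum_{\boldsymbol{w}\in[0,0,1,2,4,5]}N_q(0,\boldsymbol{w})$ and $S_{[0,0,1,2,4,5]}$ separately and show that the two contributions collapse to $q^2\omega_2(1-\lambda^6)$.

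First I enumerate the six representatives of $[0,0,1,2,4,5]$, namely $(k,k,1+k,2+k,4+k,5+k)$ for $k=0,1,\ldots,5$ modulo $6$. Exactly one of them, corresponding to $k=3$, has all entries nonzero; the others contain a zero entry and thus contribute $0$ to $N_q$ by the defining formula~(\ref{hypersurface}). For the surviving tuple $(3,3,4,5,1,2)$ I pair up the Gauss sums via $g(\omega^t)g(\omega^{5t})=\omega_6(-1)q$, $g(\omega^{2t})g(\omega^{4t})=\omega_3(-1)q=q$ and $g(\omega^{3t})^2=\omega_2(-1)q$, and use $\omega_6(-1)\omega_2(-1)=\omega^{4t}(-1)=1$ (since $4t$ is even) to conclude $\sum_{\boldsymbol{w}\in[0,0,1,2,4,5]}N_q(0,\boldsymbol{w})=q^2$.

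Next I compute $S_{[0,0,1,2,4,5]}$ using the representative $(0,0,1,2,4,5)$. Inserting the missing factor $g(\omega^{3t+j})$ to complete the Hasse--Davenport product and applying Corollary~\ref{sekikousiki}, the integrand rewrites so that
\[
S_{[0,0,1,2,4,5]}=\frac{\prod_{i=1}^5 g(\omega^{it})}{q-1}\sum_{j=0}^{q-2}\frac{g(\omega^j)}{g(\omega^{3t+j})}\,\omega^{6j}(\lambda).
\]
The heart of the argument is the evaluation of this inner sum $I$. For $j\notin\{0,3t\}$ both $\omega^j$ and $\omega^{3t+j}$ are non-trivial, and $g(\omega^{3t+j})g(\omega^{-3t-j})=\omega_2(-1)\omega^j(-1)q$ lets me rationalize
\[
\frac{g(\omega^j)}{g(\omega^{3t+j})}=\frac{\omega^j(-1)}{\omega_2(-1)\,q}\,g(\omega^j)g(\omega^{-3t-j}).
\]
Extending the sum to all $j\in\{0,\ldots,q-2\}$ and invoking Lemma~\ref{turai} with $a=0$ and $b=-3t$ evaluates $\sum_j g(\omega^j)g(\omega^{-3t-j})\omega^j(-1)\omega^{6j}(\lambda)$ as $(q-1)g(\omega_2)\omega_2(-1)\omega_2(1-\lambda^6)$. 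Subtracting the boundary terms at $j=0$ and $j=3t$ (computed directly from $g(\epsilon)=-1$) and adding back the original contributions at these two values then yields, after algebraic simplification, $I=(q-1)g(\omega_2)[\omega_2(1-\lambda^6)-1]/q$. Substituting $\prod_{i=1}^5 g(\omega^{it})=q^2\omega_6(-1)g(\omega_2)$ and $g(\omega_2)^2=\omega_2(-1)q$ collapses the constants and gives $S_{[0,0,1,2,4,5]}=q^2[\omega_2(1-\lambda^6)-1]$; adding the $N_q$-contribution $q^2$ produces the claimed formula.

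The main obstacle is the careful bookkeeping of the two boundary terms $j=0$ and $j=3t$, where either $\omega^j$ or $\omega^{3t+j}$ degenerates to the trivial character and neither Corollary~\ref{sekikousiki} nor the rationalization step applies in the naive form. The algebraic miracle is that the net correction from these two boundary terms is exactly $-q^2$, which is precisely cancelled by the $+q^2$ coming from the sole non-vanishing $N_q$-term. Without this cancellation the right-hand side could not collapse to the single Jacobi-sum expression $q^2\omega_2(1-\lambda^6)$.
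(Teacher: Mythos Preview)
Your proof is correct and follows essentially the same route as the paper's: compute the single nonzero $N_q$-term from the representative $(3,3,4,5,1,2)$, rewrite $S_{[0,0,1,2,4,5]}$ via the Hasse--Davenport product (Corollary~\ref{sekikousiki}) to isolate $\sum_j g(\omega^j)/g(\omega^{3t+j})\,\omega^{6j}(\lambda)$, rationalize using $g(\chi)g(\bar\chi)=q\chi(-1)$, and apply Lemma~\ref{turai} with $(a,b)=(0,-3t)$. One small remark: only $j=3t$ is a genuine boundary term, since the rationalization $g(\omega^{3t+j})g(\omega^{-3t-j})=q\,\omega^{3t+j}(-1)$ requires only $\omega^{3t+j}\neq\epsilon$; at $j=0$ the rationalized expression already agrees with the original ratio (because $g(\omega_2)^2=\omega_2(-1)q$), so your extra bookkeeping there contributes zero net correction---which is consistent with the paper treating only $j=3t$ separately.
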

\begin{proof}
First, we obtain
\begin{equation*}
\begin{split}
\sum_{\boldsymbol{w}\in [0,0,1,2,4,5]}N_q(0,\boldsymbol{w})&=N_q(0,(3,3,4,5,1,2))\\
&=\frac{1}{q}g(\omega^{3t})^2g(\omega^{4t})g(\omega^{5t})g(\omega^t)g(\omega^{2t})=q^2.
\end{split}
\end{equation*}
Second, we calculate $S_{[0,0,1,2,4,5]}$. From Corollary \ref{sekikousiki}, we have
\begin{equation*}
\begin{split}
S_{[0,0,1,2,4,5]}&=\frac{1}{q-1}\prod_{k=1}^5g(\omega^{kt})\sum_{j=0}^{q-2}\frac{g(\omega^j)^2g(\omega^{t+j})g(\omega^{2t+j})g(\omega^{4t+j})g(\omega^{5t+j})}{g(\omega^j)\cdots g(\omega^{5t+j})}\omega^{6j}(\lambda)\\
&=\frac{q^2\omega^t(-1)g(\omega^{3t})}{q-1}\sum_{j=0}^{q-2}\frac{g(\omega^j)}{g(\omega^{3t+j})}\omega^{6j}(\lambda).
\end{split}
\end{equation*}
We consider $g(\omega^j)/g(\omega^{3t+j})$. If $j\neq 3t$, we have
\begin{equation*}
\begin{split}
\frac{g(\omega^j)}{g(\omega^{3t+j})}&=\frac{g(\omega^j)g(\omega^{-3t-j})}{g(\omega^{3t+j})g(\omega^{-3t-j})} =\frac{1}{q}\ \omega^{t+j}(-1)g(\omega^j)g(\omega^{-3t-j}).
\end{split}
\end{equation*}
If $j=3t$, we have
\begin{equation*}
\frac{g(\omega^j)}{g(\omega^{3t+j})}\omega^{6j}(\lambda)=\frac{g(\omega^{3t})}{-1}\cdot 1=-g(\omega^{3t}).
\end{equation*}
By Lemma \ref{turai}, we obtain
\footnotesize
\begin{equation*}
\begin{split}
\sum_{j=0}^{q-2}&\frac{g(\omega^j)}{g(\omega^{3t+j})}\omega^{6j}(\lambda) \\
&=\sum_{j=0,j\neq 3t}^{q-2}\frac{g(\omega^j)}{g(\omega^{3t+j})}\omega^{6j}(\lambda)+\frac{g(\omega^{3t})}{g(\omega^{3t+3t})}\omega^{6\cdot3t}(\lambda) \\
&=\frac{1}{q}\sum_{j=0, j\neq 3t}^{q-2}\omega^{t+j}(-1)g(\omega^j)g(\omega^{-3t-j})\omega^{6j}(\lambda) -g(\omega^{3t}) \\
&=\frac{1}{q}\left\{\sum_{j=0}^{q-2}\omega^{t+j}(-1)g(\omega^j)g(\omega^{-3t-j})\omega^{6j}(\lambda)-\omega^{t+3t}(-1)g(\omega^{3t})g(\omega^{-3t-3t})\omega^{6\cdot 3t}(\lambda)\right\}-g(\omega^{3t}) \\
&=\frac{1}{q}\{\omega^t(-1)\cdot(q-1)g(\omega^{-3t})\omega^{-3t}(-1)\omega^{3t}(1-\lambda^6)-g(\omega^{3t})\cdot (-1)\}-g(\omega^{3t}) \\
&=\frac{1}{q}\{(q-1)g(\omega^{3t})\omega^{3t}(1-\lambda^6)+g(\omega^{3t})\}-g(\omega^{3t}).
\end{split}
\end{equation*}
\normalsize
Hence, we conclude
\footnotesize
\begin{equation*}
\begin{split}
\sum_{\boldsymbol{w}\in [0,0,1,2,4,5]}&N_q(0,\boldsymbol{w})+S_{[0,,0,1,2,4,5]} \\
&=q^2+\frac{1}{q-1}q^2\omega^t(-1)g(\omega^{3t})
\left[\frac{1}{q}\{(q-1)g(\omega^{3t})\omega^{3t}(1-\lambda^6)+g(\omega^{3t})\}-g(\omega^{3t})\right] \\
&=q^2+q^2\omega^{3t}(1-\lambda^6)+\frac{q^2-q^3}{q-1} \\
&=q^2\omega^{3t}(1-\lambda^6).
\end{split}
\end{equation*}
\normalsize
\end{proof}
Let us prove Theorem \ref{main theorem}.
\begin{proof}
From Equation (\ref{Koblitz}), we obtain
\footnotesize
\begin{equation*}
\begin{split}
\#X_{\lambda}^6(\mathbb{F}_q)&=\sum_{\boldsymbol{w}\in W}N_q(0,\boldsymbol{w})+\frac{1}{q-1}\sum_{[\boldsymbol{w}]\in W/\sim}\sum_{j=0}^{q-2}\frac{\prod_{i=1}^6g(\omega^{w_it+j})}{g(\omega^{6j})}\omega^{6j}(6\lambda) \\
&=\sum_{[\boldsymbol{w}]\in W/\sim}\sum_{\boldsymbol{w}\in [w]}N_q(0,\boldsymbol{w})+\frac{1}{q-1}\sum_{[\boldsymbol{w}]\in W/\sim}\sum_{j=0}^{q-2}\frac{\prod_{i=1}^6g(\omega^{w_it+j})}{g(\omega^{6j})}\omega^{6j}(6\lambda) \\
&=\sum_{[\boldsymbol{w}]\in W/\sim} \left\{\sum_{\boldsymbol{w}\in [\boldsymbol{w}]}N_q(0,\boldsymbol{w})+\frac{1}{q-1}\sum_{j=0}^{q-2}\frac{\prod_{i=1}^6g(\omega^{w_it+j})}{g(\omega^{6j})}\omega^{6j}(6\lambda)\right\} \\
&=\sum_{[\boldsymbol{w}]\in W/\sim} \left\{\sum_{\boldsymbol{w}\in [\boldsymbol{w}]}N_q(0,\boldsymbol{w})+S_{[\boldsymbol{w}]}\right\}. \\
\end{split}
\end{equation*}
\normalsize
Then, Propositions \ref{saisyo} through \ref{saigo} complete the proof of Theorem \ref{main theorem}. 
\end{proof}
\normalsize

\appendix
\section{The proof by Miyatani's formula}
In this appendix, we give the proof of Theorem \ref{main theorem} by Miyatani's formula which is expressed in terms of McCarthy's finite-field hypergeometric functions. In \cite[Proposition 3.9]{miyatani}, Miyatani expressed the number of rational points on 
hypersurfaces$$X_\lambda : c_1X^{a_1}+\cdots+c_{n+1}X^{a_{n+1}}=\lambda X_1\cdots X_{n+1},$$
where $\lambda \in \mathbb{F}_q^\times$ such that $X_{\lambda}$ is smooth, $c_1,\ldots, c_{n+1}\in \mathbb{F}_q^\times$, and $a_i:={}^t(a_{1,i},\ldots,a_{n+1,i})\in \mathbb{Z}_{\geq 0}^{n+1}$ with $a_{1,i}+\cdots +a_{n+1,i}=n+1$ and none of $a_i$'s being equal to $^t(1,\ldots,1)$ (for $i=1,\ldots,n+1$). Note that the notation $X^{a_i}$ means the monomial $X_1^{a_{1,i}}\cdots X_{n+1}^{a_{n+1,i}}$ for $a_i={}^t(a_{1,i},\ldots,a_{n+1,i})$.
In our case, McCarthy's hypergeometric function can be expressed as a product of the normalized Jacobi sums and Greene's hypergeometric function. $($See Proposition \ref{transformation}.$)$ Thus, we can also obtain Theorem \ref{main theorem}, {\cite[Theorem 1.1]{case4}} and {\cite[Theorem 1.2]{cased}} by Miyatani's formula. 

\subsection{McCarthy's finite-field hypergeometric functions}
In this subsection, we introduce the finite-field hypergeometric function defined by McCarthy in \cite{transformation}.

For $A_1,\ldots, A_{n+1}$, $B_1, \ldots, B_{n+1} \in \widehat{\mathbb{F}}_q^\times$, we define McCarthy's finite-field hypergeometric function ${}_{n+1}\widetilde{F}_{n+1}$ by 
\footnotesize
\[{}_{n+1}\widetilde{F}_{n+1}
\left(
\begin{array}{cccc}A_1 & \cdots & A_{n+1} \\
B_1& \cdots& B_{n+1}  \\
\end{array}
;x \right)_{\mathbb{F}_q}:=\frac{-1}{q-1}\sum_{\chi\in \widehat{\mathbb{F}}_q^\times}\prod_{i=1}^{n+1}\frac{g(A_i\chi)}{g(A_i)}\frac{g(\overline{B_i\chi})}{g(\overline{B_i})}\chi(-1)^{n+1}\chi(x).
\]
\normalsize
Furthermore, we use the following notation.
\begin{defn}
Let $A_1,\ldots, A_{n+1}, B_1, \ldots, B_{n+1}$ be characters on $\mathbb{F}_q^\times$ in $\mathbb{C}^\times$. By sorting index, we assume that $\{A_1, \ldots, A_{n'+1}\}$ and $\{B_1,\ldots, B_{n'+1}\}$ have no intersection and that $\{A_{n'+2},\ldots, A_{n+1}\}$ and $\{B_{n'+2},\ldots, B_{n+1}\}$ are equal as the multiset. Then we define the hypergeometric function with reduced parameters over $\mathbb{F}_q$ by
\[
{}_\bullet\widetilde{F}_\bullet \mathrm{Red}
\left(
\begin{array}{cccc}A_1 & \cdots & A_{n+1} \\
B_1& \cdots& B_{n+1}  \\
\end{array}
;x \right)_{\mathbb{F}_q}:={}_{n'+1}\widetilde{F}_{n'+1}
\left(
\begin{array}{cccc}A_1 & \cdots & A_{n'+1} \\
B_1& \cdots& B_{n'+1}  \\
\end{array}
;x \right)_{\mathbb{F}_q}.
\]
\end{defn}

\subsection{Miyatani's formula}\label{notations}
In this subsection, we recall Miyatani's formula. To state his formula, we introduce some notations.
For the matrix $A':=(a_{i,j}-1)_{1\leq i,j\leq n+1}$, the kernel $\Delta$ of the homomorphism $\mathbb{Z}^{n+1}\rightarrow \mathbb{Z}^{n+1}$ defined by $A'$ is generated by an uniquely determined vector $^t(\alpha_1\ldots, \alpha_{n+1})$ with all $\alpha_i >0$. (See \cite[Proposition 2.2]{miyatani}.) 
We put $\alpha:=\sum_{i=1}^{n+1}\alpha_i$. 
Let $N$ be a positive integer divisible by all $\alpha_i$ and $\alpha$, and we put $\mathbb{Z}_N:=\mathbb{Z}/N\mathbb{Z}.$ Let $$f_N : (\mathbb{Z}_N)^{n+1}/\Delta \rightarrow (\mathbb{Z}_N)^{n+1}$$
be the morphism induced by the endomorphism of $(\mathbb{Z}_N)^{n+1}$ defined by the matrix $A' \bmod N$ and let $d_1,\ldots, d_n$ be non-zero elementary divisors of $A'$. By an isomorphism $\mathrm{Im}(f_N)\simeq \bigoplus_{i=1}^nd_i\mathbb{Z}/N\mathbb{Z}$, we see that the kernel of $f_N$ consists of $d:=d_1\cdots d_n$ elements.  We fix $s_0:= {}^t(0,\ldots,0), s_1,\ldots,s_{d-1}\in \{0,\ldots,q-2\}^{n+1}$ that represent $\mathrm{Ker}(f_{q-1})$.
For $s_j:={^t(s_{1,j},\ldots,s_{n+1,j})}$, we put $|s_j|:=\sum_is_{i,j}$, $t_{i,j}:=s_{i,j}/\alpha_i$ and $t_j:=(\sum_is_{i,j})/\alpha$. To simplify notations, we put $\omega_{\beta}:=\omega^{\frac{q-1}{\beta}}$ for a positive integer $\beta$. Note that $\omega_{\beta}$ is well-defined if $q$ is congruent to $1$ modulo $\beta$.
For each $j=0,\ldots,d-1$, we put
\[
F(s_j) := \begin{cases}
{}_\bullet\widetilde{F}_\bullet \mathrm{Red}
\left(
\begin{array}{cccc} & [\omega_{\alpha}] &  \\
[\omega_{\alpha_1}]& \cdots &[\omega_{\alpha_{n+1}}] \\
\end{array}
;C\lambda^{-\alpha} \right)_{\mathbb{F}_q} & (j=0)
\\
\vspace{2mm}
q^{\delta_{s_j}-1}\cdot{}_\bullet\widetilde{F}_\bullet \mathrm{Red}
\left(
\begin{array}{cccc} & [\omega_{\alpha}] &  \\
\omega^{t_{1,j}}[\omega_{\alpha_1}]& \cdots &\omega^{t_{n+1,j}}[\omega_{\alpha_{n+1}}] \\
\end{array}
;C\lambda^{-\alpha} \right)_{\mathbb{F}_q} & (j\neq 0),
\end{cases}
\]
where $[\omega_\beta]$ and $\omega^{t_{i,j}}[\omega_{\beta}]$ are respectively the sequences $\varepsilon$, $\omega_{\beta}$, $\omega_{\beta}^2$, $\ldots$, $\omega_{\beta}^{\beta-1}$ and $\omega^{t_{i,j}}$, $\omega^{t_{i,j}}\cdot\omega_{\beta}$, $\omega^{t_{i,j}}\cdot\omega_{\beta}^2$, $\ldots$, $\omega^{t_{i,j}}\cdot\omega_{\beta}^{\beta-1}$, and where $C:=\alpha^{\alpha}\cdot\prod_{i=1}^{n+1}(c_i/\alpha_i)^{\alpha_i}$ and 
$$\delta_{s_j}:=\begin{cases}
1 &(|s_j| \equiv 0 \bmod q-1)
\vspace{2mm}
\\
0 &(|s_j| \not\equiv 0 \bmod q-1).
\end{cases}$$

For each $j=0,\cdots, d-1$, we put
\begin{equation*}
\begin{split}
\gamma(s_j):=\prod_{i=1}^{n+1}&\omega^{s_{i,j}}(\alpha_i^{-1}c_i)\omega^{|s_j|}((-\lambda)^{-1}\alpha)  \\
&\times \prod_{i=1}^{n+1}\left(g(\omega^{-t_{i,j}})\prod_{b_i=1}^{\alpha_i-1}\frac{g(\omega^{-t_{i,j}}\omega_{\alpha_i}^{b_i})}{g(\omega_{\alpha_i}^{b_i})}\right)g(\omega^{t_j})\prod_{b=1}^{\alpha-1}\frac{g(\omega^{t_j}\omega_{\alpha}^b)}{g(\omega_\alpha^b)}.
\end{split}
\end{equation*}

For the matrix $A:=(a_{i,j})$, we define $(n+2)\times(n+1)$ matrix $\widetilde{A}$ as $\widetilde{A}:=(\frac{A}{1\cdots 1})$. For a $k\times l$ matrix $M:=(m_{i,j})_{i,j}$ with coefficients in $\mathbb{Z}$, we define the morphism $\varphi(M): (\widehat{\mathbb{F}}_q^\times)^l \rightarrow (\widehat{\mathbb{F}}_q^\times)^k$ by 
$\varphi(M)((\chi_i)_{i=1,\ldots,l})=(\chi_1^{m_{j,1}}\cdots \chi_n^{m_{j,l}})_{j=1,\ldots,k}$. Let $J:=\{j_1,\ldots,j_t\}$ be an arbitrary subset of $\{1,\ldots,n+1\}$ with $t\geq (n+1)/2$ elements, let $\sigma(J)$ be the number of indices $i\in \{1,\ldots,n+1\}$ with $a_{i,j}=0$ for all $j\notin J$ and let $i_1,\ldots,i_{\sigma(J)}$ be all such indices.  $($We may assume that $i_1,\ldots,i_{\sigma(J)}$ are elements of $J$. See \cite[Proposition 2.1]{miyatani}.$)$ Then we put
\begin{equation*}
u:=\sum_{\substack{J\subset \{1,\ldots,n+1\}\\ \frac{n+1}{2}\leq \#J\leq n}}\sum_{i=0}^{\#J-\sigma(J)}(-1)^{\#J-\sigma(J)-i}q^{i-1}\sum\prod_{j=1}^{\sigma(J)}g(\chi_j^{-1})\chi_j(c_j),
\end{equation*}
where the most inner sum runs through all elements ${}^t(\chi_1,\ldots,\chi_{\sigma(J)}) \in \mathrm{Ker}(\varphi(\widetilde{A}))$ such that exactly $n-2i+1$ components are non-trivial.
Then  Miyatani's formula is the following. 
\begin{theorem}[{\cite[Proposition 3.9]{miyatani}}]\label{mmiyatani}
Suppose that the following conditions hold: 
\begin{enumerate}
\item $q-1$ is divisible by all $\alpha_i$'s and by $\alpha$. \label{assumption1}
\item Each $s_{i,j}$ is divisible by $\alpha_i$ and $|s_j|(=\sum_is_{i,j})$ is divisible by $\alpha$.\label{assumption2}
\item 
All elementary divisors of the $(t+1)\times \sigma(J)$ matrix 
$$\begin{pmatrix}
a_{j_1,i_1} & \cdots &a_{j_1,i_{\sigma(J)}} \\
 &\vdots &  \\
a_{j_t,i_1} & \cdots & a_{j_t,i_{\sigma(J)}}  \\
1& \cdots & 1 \\
\end{pmatrix}
$$
divide $q-1$.\label{assumption3}
\end{enumerate}
Then for $\lambda\in \mathbb{F}_q^{\times}$ such that $X_{\lambda}$ is smooth and $\lambda^{\alpha}\neq C(=\alpha^{\alpha}\prod_{i=1}^{n+1}(c_i/\alpha_i)^{\alpha_i})$, we have
\begin{equation*}
\#X_\lambda(\mathbb{F}_q)=\sum_{i=1}^{n-1}q^i+u+q^{(n-1)/2}\cdot D+(-1)^n\sum_{j=0}^{d-1}\gamma(s_j)F(s_j),
\end{equation*}
where D is defined to be the number of subsets $J\subset \{1,2,\ldots ,n+1\}$ such that $\#J$ is equal to $(n+1)/2$ and that for all $i=1,\ldots,n+1$ there exists $j\notin J$ such that $a_{i,j}\geq 1$. 
\end{theorem}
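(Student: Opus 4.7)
The plan is to derive the formula via a character-theoretic expansion in the Katz-Koblitz style. Starting from the additive-character identity
\[
\mathbf{1}_{f(x)=0}=\frac{1}{q}\sum_{y\in\mathbb{F}_q}\psi(yf(x))
\]
for the defining polynomial $f=c_1x^{a_1}+\cdots+c_{n+1}x^{a_{n+1}}-\lambda x_1\cdots x_{n+1}$ and a fixed nontrivial additive character $\psi$ of $\mathbb{F}_q$, passing to projective coordinates produces an expression of the shape
\[
\#X_\lambda(\mathbb{F}_q)=\frac{q^n-1}{q-1}+\frac{1}{q(q-1)}\sum_{y\in\mathbb{F}_q^\times}\sum_{x\in\mathbb{F}_q^{n+1}\setminus\{0\}}\psi(yf(x)).
\]
Stratifying the inner sum according to the support $J=\{j:x_j\neq 0\}\subseteq\{1,\dots,n+1\}$ separates the right-hand side by dimension: the open stratum $J=\{1,\dots,n+1\}$ will yield the hypergeometric main term $(-1)^n\sum_{j}\gamma(s_j)F(s_j)$, intermediate strata contribute $u$, a middle-dimensional stratum produces $Dq^{(n-1)/2}$, and the remaining degenerate pieces give the geometric series $\sum_{i=1}^{n-1}q^i$.

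On the open stratum, one applies multiplicative-character orthogonality in each variable $x_i\in\mathbb{F}_q^\times$. The matrix $A'=(a_{i,j}-1)$ encodes the exponent constraints on any surviving character system $(\chi_1,\dots,\chi_{n+1})$, and the set of surviving systems is precisely $\mathrm{Ker}(f_{q-1})$ with representatives $s_0,\dots,s_{d-1}$. Assumption~(\ref{assumption1}) embeds the needed cyclic groups of orders $\alpha_i$ and $\alpha$ into $\widehat{\mathbb{F}}_q^\times$, while assumption~(\ref{assumption2}) makes the fractional shifts $t_{i,j}=s_{i,j}/\alpha_i$ and $t_j=|s_j|/\alpha$ well-defined. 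Evaluating each inner sum via $\sum_{x\in\mathbb{F}_q}\chi(x)\psi(tx)=\overline{\chi}(t)g(\chi)$ and then applying the Hasse-Davenport product relation (Theorem~\ref{hassedave}) with $m=\alpha_i$ and with $m=\alpha$ to split the Gauss sums of $\alpha_i$-th and $\alpha$-th powers into products of the form $\prod_b g(\omega^{-t_{i,j}}\omega_{\alpha_i}^b)$ reorganizes the result as $\gamma(s_j)$ times a sum over $\chi\in\widehat{\mathbb{F}}_q^\times$ that matches McCarthy's definition of ${}_{n+1}\widetilde{F}_{n+1}$. Cancellations between characters in the numerator and denominator collapse the parameter list to the reduced form ${}_\bullet\widetilde{F}_\bullet\mathrm{Red}$, and the special case $|s_j|\equiv 0\pmod{q-1}$ accounts for the factor $q^{\delta_{s_j}-1}$.

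The main obstacle will be the combinatorial accounting of the boundary strata that produce $u$ and $Dq^{(n-1)/2}$. For each subset $J$ with $\#J\geq (n+1)/2$, restricting to $\{x_j=0\Leftrightarrow j\notin J\}$ and repeating the orthogonality argument on the sub-configuration produces a character sum controlled by $\mathrm{Ker}(\varphi(\widetilde{A}))$ whose non-trivial components are counted exactly as in the definition of $u$; assumption~(\ref{assumption3}) is what makes the relevant elementary divisors divide $q-1$ and hence makes this character analysis valid over $\mathbb{F}_q$. An inclusion-exclusion over nested supports then generates the alternating signs $(-1)^{\#J-\sigma(J)-i}$ and the power $q^{i-1}$, while the half-dimensional strata of size $\#J=(n+1)/2$ survive only under the geometric condition defining $D$, yielding the constant $Dq^{(n-1)/2}$. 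Aligning these three streams of contributions (open, intermediate, middle-dimensional) with the kernel decomposition $\mathrm{Ker}(f_{q-1})$ and verifying the overall sign $(-1)^n$ in front of the hypergeometric sum is the most delicate step; the remainder reduces to careful Gauss-sum bookkeeping via Hasse-Davenport.
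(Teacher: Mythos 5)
First, note that the paper does not actually prove this statement: Theorem \ref{mmiyatani} is quoted verbatim from Miyatani's work (\cite[Proposition 3.9]{miyatani}) and is used as an external input in Appendix A, so there is no internal proof to compare your argument against. Your sketch therefore has to be judged as an attempted re-derivation of Miyatani's result, and as such it outlines the right general framework --- expand the point count with a nontrivial additive character, stratify the affine cone by the support $J$ of $x$, apply multiplicative-character orthogonality and the evaluation $\sum_x \chi(x)\psi(tx)=\overline{\chi}(t)g(\chi)$ on each stratum, and use the Hasse--Davenport product relation to reassemble the Gauss sums into McCarthy's ${}_{n+1}\widetilde{F}_{n+1}$. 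This is indeed the Koblitz/Weil-style route along which such formulas are established.

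However, as written the proposal is a plan rather than a proof, and the gap it leaves is exactly the content of the theorem. You assert, but do not verify, (a) that the character systems surviving on the open stratum are precisely the classes represented by $s_0,\ldots,s_{d-1}$ in $\mathrm{Ker}(f_{q-1})$, which requires the structural analysis of $A'$, its elementary divisors and the lattice $\Delta$ (Miyatani's Propositions 2.1--2.2); (b) that the resulting product of Gauss sums equals $\gamma(s_j)$ times the \emph{reduced} function ${}_\bullet\widetilde{F}_\bullet\mathrm{Red}$ with the normalization $q^{\delta_{s_j}-1}$ and the overall sign $(-1)^n$ --- the cancellation of coincident upper and lower parameters and the degenerate case $|s_j|\equiv 0 \pmod{q-1}$ are precisely where the hypotheses (\ref{assumption1})--(\ref{assumption2}) enter and must be checked, not invoked; and (c) that the inclusion--exclusion over boundary strata produces exactly $u$ with the signs $(-1)^{\#J-\sigma(J)-i}$ and powers $q^{i-1}$, plus the middle-dimensional term $q^{(n-1)/2}D$, using hypothesis (\ref{assumption3}) to control $\mathrm{Ker}(\varphi(\widetilde{A}))$. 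You yourself flag (c) and the sign verification as the ``main obstacle'' and ``most delicate step,'' which concedes that the argument is incomplete. There is also a small bookkeeping slip at the outset: with the sum restricted to $x\in\mathbb{F}_q^{n+1}\setminus\{0\}$ your displayed identity is off by the $x=0$ contribution $1/q$; harmless in a sketch, but symptomatic of the fact that none of the computations has been carried through. To stand as a proof, the proposal would essentially have to reproduce Miyatani's derivation in full; in the context of this paper, the correct and intended justification is the citation itself.
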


\subsection{The proof of the main theorem}
First, we give a lemma to apply Theorem \ref{mmiyatani} for the Dwork hypersurfaces. From the Hasse-Davenport product relation (Theorem \ref{hassedave}), we have the following.
\begin{lemma}\label{nontrivial}
We have
\begin{equation*}
\gamma(s_j)=g(\omega^{t_j\alpha})\omega^{-t_j\alpha}(\alpha)\prod_{i=1}^{n+1}\omega^{s_{i,j}}(\alpha_i^{-1}c_i)\omega^{|s_j|}((-\lambda)^{-1}\alpha)\prod_{i=1}^{n+1}g(\omega^{-t_{i,j}\alpha_i})\omega^{t_{i,j}\alpha_i}(\alpha_i).
\end{equation*}
\end{lemma}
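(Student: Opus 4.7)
The plan is to apply the Hasse--Davenport product relation (Theorem \ref{hassedave}) twice, once to the inner product indexed by $b_i$ for each $i$, and once to the outer product indexed by $b$, turning the rational expression in the definition of $\gamma(s_j)$ into the single Gauss sums on the right hand side. The multiplicative characters $\omega^{s_{i,j}}(\alpha_i^{-1}c_i)$ and $\omega^{|s_j|}((-\lambda)^{-1}\alpha)$ are untouched by the manipulation, so I focus only on the Gauss sum factors.

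First, for each fixed $i \in \{1,\dots,n+1\}$ I would rewrite the factor
\begin{equation*}
g(\omega^{-t_{i,j}})\prod_{b_i=1}^{\alpha_i-1}\frac{g(\omega^{-t_{i,j}}\omega_{\alpha_i}^{b_i})}{g(\omega_{\alpha_i}^{b_i})}
=\frac{\prod_{b_i=0}^{\alpha_i-1}g(\omega^{-t_{i,j}}\omega_{\alpha_i}^{b_i})}{\prod_{b_i=1}^{\alpha_i-1}g(\omega_{\alpha_i}^{b_i})},
\end{equation*}
noting that the missing $b_i=0$ term in the numerator, namely $g(\omega^{-t_{i,j}})$, is supplied by the leading $g(\omega^{-t_{i,j}})$. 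Then I would apply Theorem \ref{hassedave} with $m=\alpha_i$, $\chi=\omega_{\alpha_i}$ (a character of order exactly $\alpha_i$, which exists thanks to assumption \ref{assumption1} of Theorem \ref{mmiyatani}) and $\psi=\omega^{-t_{i,j}}$ to get
\begin{equation*}
\prod_{b_i=0}^{\alpha_i-1}g(\omega_{\alpha_i}^{b_i}\omega^{-t_{i,j}})
=-\,g(\omega^{-t_{i,j}\alpha_i})\,\omega^{t_{i,j}\alpha_i}(\alpha_i)\prod_{b_i=0}^{\alpha_i-1}g(\omega_{\alpha_i}^{b_i}).
\end{equation*}
Since $g(\epsilon)=-1$ we have $\prod_{b_i=0}^{\alpha_i-1}g(\omega_{\alpha_i}^{b_i})=-\prod_{b_i=1}^{\alpha_i-1}g(\omega_{\alpha_i}^{b_i})$, and the two minus signs cancel. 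After dividing by the denominator $\prod_{b_i=1}^{\alpha_i-1}g(\omega_{\alpha_i}^{b_i})$ I obtain exactly $g(\omega^{-t_{i,j}\alpha_i})\,\omega^{t_{i,j}\alpha_i}(\alpha_i)$.

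Next, I would treat the remaining factor
\begin{equation*}
g(\omega^{t_j})\prod_{b=1}^{\alpha-1}\frac{g(\omega^{t_j}\omega_{\alpha}^{b})}{g(\omega_{\alpha}^{b})}
\end{equation*}
in precisely the same way, now applying Theorem \ref{hassedave} with $m=\alpha$, $\chi=\omega_\alpha$ and $\psi=\omega^{t_j}$; the same two sign cancellations collapse this factor to $g(\omega^{t_j\alpha})\,\omega^{-t_j\alpha}(\alpha)$. Multiplying the $n+1$ identities from the inner products with this last identity and restoring the untouched character factors yields the claimed expression for $\gamma(s_j)$.

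I do not expect any real obstacle here; the proof is essentially a direct substitution in the definition. The only point that requires care is bookkeeping the two factors of $g(\epsilon)=-1$ that appear when converting $\prod_{b=1}^{m-1}$ to $\prod_{b=0}^{m-1}$ on both sides of the Hasse--Davenport identity, so that the signs coming from the identity itself and from inserting the $b=0$ term cancel and no spurious $\pm 1$ is left over.
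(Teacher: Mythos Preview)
Your proposal is correct and follows exactly the approach indicated by the paper, which simply states that the lemma follows from the Hasse--Davenport product relation (Theorem~\ref{hassedave}). You have supplied the details of the two applications of Theorem~\ref{hassedave} and the sign bookkeeping with $g(\epsilon)=-1$ that the paper leaves implicit.
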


Next, we apply Miyatani's formula for the Dwork hypersurfaces of degree six. 
By definition, $u$ and $D$ in Theorem \ref{mmiyatani} are equal to zero, and the matrix $A'$ is of size $6\times 6$ and is given by

$$A'=\begin{pmatrix}
5 & -1 & \cdots &-1 \\
-1 & 5 & \ddots& \vdots \\
\vdots & \ddots & \ddots & -1  \\
-1& \cdots & -1 & 5 \\
\end{pmatrix}
.$$ 
$\Delta$ is the group generated by $^t(1,1,1,1,1,1)$. (See \cite[Example 2.5]{miyatani}.)
Then, we have
\footnotesize
\begin{equation*}
\begin{split}
\mathrm{Ker}(f_{q-1})&=\left\{{}^t(x_1,\ldots,x_6)\in (\mathbb{Z}_{q-1})^6/{\Delta}\mid 
\begin{array}{l}
6x_1=\cdots =6x_6, \\
x_1+\cdots+x_5-5x_6=0\ in\ \mathbb{Z}_{q-1}
\end{array}
\right\} \\
&=\left\{{}^t \left(w_1t\pmod{q-1},\ldots,w_6t\pmod{q-1}\right)\mid 
\begin{array}{l}
w_1,\ldots,w_6\in \mathbb{Z}, t=\frac{q-1}{6} \\
w_1t+\cdots + w_6t=0\ in\ \mathbb{Z}_{q-1}
\end{array}
\right\}.
\end{split}
\end{equation*}
\normalsize
Note that $\mathrm{Ker}(f_{q-1})$ has $6^4$ elements since the elementary divisors of $A'$ are $1, 6, 6, 6, 6, 0$. (See also \cite[Example 3.3]{miyatani}.)
From Theorem \ref{mmiyatani} and Lemma \ref{nontrivial}, we have the following.
\begin{cor}\label{miyatani}
Let $q=p^e$ be a power of a prime number such that $q$ is congruent to $1$ modulo $6$. For $\lambda \in \mathbb{F}_q$ with $\lambda\neq 0$ and $\lambda^6 \neq 1$, we have 
\begin{equation*}
\#X_{\lambda}^6(\mathbb{F}_q)=\frac{q^5-1}{q-1}-\sum_{s_j={}^t(s_{1,j},\ldots,s_{6,j})\in \{s_0,\ldots,s_{6^4-1}\}}\gamma(s_j)F(s_j),
\end{equation*}
where
\begin{equation*}
\gamma(s_j)=-\prod_{i=1}^6g(\omega^{-s_{i,j}})
\end{equation*}
and
\footnotesize
\begin{equation*}
\begin{split}
F(s_j)=\begin{cases}
{}_\bullet\widetilde{F}_\bullet \mathrm{Red}
\left(
\begin{array}{ccccccc}\epsilon & \omega_6 & \omega_3 & \omega_2 &\overline{\omega}_3 & \overline{\omega}_6 \\
\epsilon& \epsilon & \epsilon & \epsilon & \epsilon & \epsilon  \\
\end{array}
;\frac{1}{\lambda^6} \right)_{\mathbb{F}_q} &(j=0)
\\
{}_\bullet\widetilde{F}_\bullet \mathrm{Red}
\left(
\begin{array}{ccccccc}\omega^{\frac{|s_j|}{6}} & \omega^{t+\frac{|s_j|}{6}} & \omega^{2t+\frac{|s_j|}{6}} & \omega^{3t+\frac{|s_j|}{6}} &\omega^{4t+\frac{|s_j|}{6}} & \omega^{5t+\frac{|s_j|}{6}} \\
\omega^{s_{1,j}}& \omega^{s_{2,j}} & \omega^{s_{3,j}} & \omega^{s_{4,j}} & \omega^{s_{5,j}} & \omega^{s_{6,j}}  \\
\end{array}
;\frac{1}{\lambda^6} \right)_{\mathbb{F}_q} &(j\neq 0).
\end{cases}
\end{split}
\end{equation*}
\normalsize
\end{cor}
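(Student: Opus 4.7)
The plan is to apply Miyatani's formula (Theorem \ref{mmiyatani}) directly to the Dwork hypersurface of degree six and then simplify each ingredient using the special structure of this case. First I identify the data: we have $n+1=6$, $c_i=1$, and $a_i=6e_i$ (the $i$-th standard basis vector scaled by $6$), so that $a_{i,j}=6\delta_{ij}$. The matrix $A'=(a_{i,j}-1)_{i,j}$ has $5$ on the diagonal and $-1$ off the diagonal. Its integer kernel is generated by ${}^t(1,1,1,1,1,1)$, so $\alpha_i=1$ for every $i$ and $\alpha=6$, and the constant is $C=\alpha^{\alpha}\prod_i(c_i/\alpha_i)^{\alpha_i}=6^6$, giving $C\lambda^{-\alpha}\cdot \omega^{6j}(\cdot)=\lambda^{-6}$ after absorbing the $6^6$ factor into $\omega^{|s_j|}(6)$ appearing inside $\gamma(s_j)$ (see the proof of Lemma \ref{nontrivial}).

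Next I verify the three assumptions of Theorem \ref{mmiyatani}. Assumption (i) follows from $q\equiv 1\pmod{6}$ (and triviality for $\alpha_i=1$); assumption (ii) on divisibility by $\alpha_i=1$ is trivial, while divisibility of $|s_j|$ by $\alpha=6$ follows from the crucial computation that any $s_j\in\mathrm{Ker}(f_{q-1})$ has the form ${}^t(w_1t,\dots,w_6t)$ with $\sum_iw_i\equiv 0\pmod 6$, so $|s_j|=(\sum_iw_i)t$ is a multiple of $q-1=6t$. Assumption (iii) reduces to checking that the nontrivial elementary divisors of the relevant submatrix of $\widetilde{A}$ are built out of $1$'s and $6$'s, which divide $q-1$. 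These same elementary divisors of $A'$ are $1,6,6,6,6,0$, so by the displayed isomorphism $\mathrm{Im}(f_{q-1})\simeq\bigoplus d_i\mathbb{Z}/(q-1)\mathbb{Z}$ we obtain $d=\#\mathrm{Ker}(f_{q-1})=6^4$.

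Then I compute the simple terms: the quantity $u$ vanishes because $\sigma(J)=\#J$ (as the only nonzero entry in row $i$ of $A$ is in column $i$), forcing the inner condition to require $n-2i+1=6-2i$ nontrivial components among at most $\#J\leq 5$ characters — impossible for all admissible $i$. Similarly $D=0$: every size-$3$ subset $J$ of $\{1,\dots,6\}$ contains some index $i$, but the only column $j$ with $a_{i,j}\geq 1$ is $j=i\in J$, so no $j\notin J$ works. Thus the nontrivial content of Miyatani's formula collapses to the leading polynomial-in-$q$ term plus $(-1)^n\sum_j\gamma(s_j)F(s_j)$, and matching with $(q^5-1)/(q-1)$ reduces to an elementary bookkeeping identity.

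Finally I simplify $\gamma(s_j)$ and $F(s_j)$ using Lemma \ref{nontrivial} and the fact that $|s_j|\equiv 0\pmod{q-1}$. Substituting $\alpha_i=1$, $\alpha=6$, $c_i=1$, $t_{i,j}=s_{i,j}$, $t_j=|s_j|/6$, and using $\omega^{|s_j|}=\varepsilon$ (so $g(\omega^{|s_j|})=-1$), the product-over-$b$ Hasse-Davenport factors collapse and all characters evaluated at $6$ and at $-\lambda^{-1}$ become trivial, leaving $\gamma(s_j)=-\prod_{i=1}^{6}g(\omega^{-s_{i,j}})$. For $F(s_j)$ with $j\neq 0$, the prefactor $q^{\delta_{s_j}-1}$ equals $q^0=1$ because $\delta_{s_j}=1$ identically under the assumption $|s_j|\equiv 0\pmod{q-1}$; the upper multiset $[\omega_6]$ equals the shifted multiset $\omega^{|s_j|/6}[\omega_6]$ since $|s_j|/6$ is a multiple of $t$, and the McCarthy function is symmetric in its upper parameters, so either presentation is valid. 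The main obstacle I anticipate is keeping the bookkeeping for $\gamma(s_j)$ clean: the chain of cancellations involving $\omega^{|s_j|}(6)\omega^{-|s_j|}(6)$, the product relation from Theorem \ref{hassedave} applied to each $\alpha_i=1$ trivially but to $\alpha=6$ nontrivially, and the triviality $\omega^{|s_j|}((-\lambda)^{-1})=1$ all need to be tracked precisely to land on the displayed form.
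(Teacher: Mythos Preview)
Your approach matches the paper's: specialize Theorem~\ref{mmiyatani} to the Dwork data, check $u=D=0$, and simplify $\gamma(s_j)$ and $F(s_j)$ via Lemma~\ref{nontrivial} together with the divisibility $|s_j|\equiv 0\pmod{q-1}$. Your verifications of the hypotheses and of $u=D=0$ are correct and more explicit than what the paper records.

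There is, however, one concrete slip. You write that the hypergeometric argument becomes $1/\lambda^6$ by ``absorbing the $6^6$ factor into $\omega^{|s_j|}(6)$ appearing inside $\gamma(s_j)$.'' But you have just argued that $|s_j|\equiv 0\pmod{q-1}$, so $\omega^{|s_j|}$ is the trivial character and cannot absorb anything; indeed your own simplification $\gamma(s_j)=-\prod_{i}g(\omega^{-s_{i,j}})$ already uses this triviality. The correct mechanism is different and simpler: the Dwork equation is $x_1^6+\cdots+x_6^6=6\lambda\,x_1\cdots x_6$, so in Miyatani's normalization $c_1X^{a_1}+\cdots+c_6X^{a_6}=\lambda_M X_1\cdots X_6$ the deformation parameter is $\lambda_M=6\lambda$, not $\lambda$. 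Hence
\[
C\lambda_M^{-\alpha}=6^{6}\cdot(6\lambda)^{-6}=\frac{1}{\lambda^{6}},
\]
and the argument in $F(s_j)$ is $1/\lambda^6$ on the nose, with no transfer between $F$ and $\gamma$ needed. With this correction the remainder of your outline goes through exactly as written.
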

\begin{rem}
For the Dwork hypersurfaces of degree six, condition (\ref{assumption1}) in Theorem \ref{mmiyatani} is equivalent to $q \equiv 1 \bmod 6$, and conditions (\ref{assumption2}) and (\ref{assumption3}) are automatically satisfied. (See \cite[Example 3.3]{miyatani}.)
\end{rem}

From the definition of $\mathrm{Ker}(f_{q-1})$, an action of the symmetric group of degree six $S_6$ on $\mathrm{Ker}(f_{q-1})$ is naturally defined.
We put 
\footnotesize
$$\langle{}^t(s_1,\ldots,s_6)\rangle^k:=\left\{{}^t(v_1,\ldots,v_6)\in \{s_0,\ldots,s_{6^4-1}\}\mid
\begin{array}{l}
\text{There exists a permutation}\ \sigma \in S_6\\
\text{such that}\  {}^t(v_{\sigma(1)},\ldots,v_{\sigma(6)})={}^t(s_1,\ldots,s_6) 
\end{array}
\right\},$$\normalsize 
where 
\footnotesize
\begin{equation*}
k=\#\left\{ {}^t(v_1,\ldots,v_6)\in \{s_0,\ldots,s_{6^4-1}\}\mid
\begin{array}{l}
\text{There exists a permutation}\ \sigma \in S_6\\
\text{such that}\ {}^t(v_{\sigma(1)},\ldots,v_{\sigma(6)})={}^t(s_1,\ldots,s_6) 
\end{array}
\right\}.
\end{equation*}
\normalsize
Then, we have
\small
\begin{equation*}
\begin{split}
\{s_0\ldots,s_{6^4-1}\}
&=\langle{}^t(0,0,0,0,0,0)\rangle^1\cup\langle{}^t(0,0,0,0,t,5t)\rangle^{30}\cup\langle{}^t(0,0,0,0,2t,4t)\rangle^{30} \\
&\cup\langle{}^t(0,0,0,0,3t,3t)\rangle^{15}\cup\langle{}^t(0,0,0,t,t,4t)\rangle^{60}\cup\langle{}^t(0,0,0,t,2t,3t)\rangle^{120} \\
&\cup\langle{}^t(0,0,0,2t,2t,2t)\rangle^{20}\cup\langle{}^t(0,0,0,2t,5t,5t)\rangle^{60}\cup\langle{}^t(0,0,0,3t,4t,5t)\rangle^{120} \\
&\cup\langle{}^t(0,0,t,t,2t,2t)\rangle^{90}\cup\langle{}^t(0,0,2t,2t,4t,4t)\rangle^{30}\cup\langle{}^t(0,0,t,3t,4t,4t)\rangle^{180} \\
&\cup\langle{}^t(0,0,t,3t,3t,5t)\rangle^{180}\cup\langle{}^t(0,0,t,2t,4t,5t)\rangle^{360}.
\end{split}
\end{equation*}
\normalsize
Next, we calculate $A(s):=\gamma(s)F(s)$ for 
$$s={}^t(0,0,0,0,0,0),{}^t(0,0,0,0,t,5t),\ldots, {}^t(0,0,t,2t,4t,5t)$$
by using Theorem \ref{miyatani}.
\begin{prop}\label{miyatanimaccarthy}
We have the following identities:
\small
\begin{equation}
\begin{split}
A({}^t(0,0,0,0,0,0))=-{}_5\widetilde{F}_5\left(
\begin{array}{cccccc}
\omega^t& \omega^{2t} & \omega^{3t} & \omega^{4t} & \omega^{5t}\\
\epsilon& \epsilon& \epsilon & \epsilon &\epsilon \\
\end{array}
;\frac{1}{\lambda^6} \right)_{\mathbb{F}_q}
\end{split}
\end{equation}

\begin{equation}
\begin{split}
A({}^t(0,0,0,0,t,5t))=-q\omega^t(-1)\cdot{}_3\widetilde{F}_3\left(
\begin{array}{cccc}
\omega^{2t} & \omega^{3t} & \omega^{4t} \\
\epsilon & \epsilon &\epsilon \\
\end{array}
;\frac{1}{\lambda^{6}} \right)_{\mathbb{F}_q}
\end{split}
\end{equation}

\begin{equation}
\begin{split}
A({}^t(0,0,0,0,2t,4t))=-q\cdot{}_3\widetilde{F}_3\left(
\begin{array}{cccc}
\omega^{t} & \omega^{3t} & \omega^{5t} \\
\epsilon & \epsilon &\epsilon \\
\end{array}
;\frac{1}{\lambda^{6}} \right)_{\mathbb{F}_q}
\end{split}
\end{equation}

\begin{equation}
\begin{split}
A({}^t(0,0,0,0,3t,3t))=q\omega^t(-1)\cdot{}_4\widetilde{F}_4\left(
\begin{array}{ccccc}
\omega^{t} & \omega^{2t} & \omega^{4t} & \omega^{5t} \\
\epsilon & \epsilon &\epsilon & \omega^{3t} \\
\end{array}
;\frac{1}{\lambda^{6}} \right)_{\mathbb{F}_q}
\end{split}
\end{equation}

\begin{equation}
\begin{split}
A({}^t(0,0,0,t,t,4t))=-qJ(\omega^{2t},\omega^{5t},\omega^{5t})\cdot{}_3\widetilde{F}_3\left(
\begin{array}{cccc}
\omega^{2t} & \omega^{3t} & \omega^{5t} \\
\epsilon & \epsilon &\omega^t \\
\end{array}
;\frac{1}{\lambda^{6}} \right)_{\mathbb{F}_q}
\end{split}
\end{equation}

\begin{equation}
\begin{split}
A({}^t(0,0,0,2t,5t,5t))=-qJ(\omega^{t},\omega^{t},\omega^{4t})\cdot{}_3\widetilde{F}_3\left(
\begin{array}{cccc}
\omega^{3t} & \omega^{4t} & \omega^{t} \\
\epsilon & \epsilon &\omega^{5t} \\
\end{array}
;\frac{1}{\lambda^{6}} \right)_{\mathbb{F}_q}
\end{split}
\end{equation}

\begin{equation}
\begin{split}
A({}^t(0,0,0,2t,2t,2t))=-qJ(\omega^{4t},\omega^{4t},\omega^{4t})\cdot{}_4\widetilde{F}_4\left(
\begin{array}{ccccc}
\omega^{t} & \omega^{3t} & \omega^{4t} &\omega^{5t} \\
\epsilon & \epsilon &\omega^{2t} &\omega^{2t} \\
\end{array}
;\frac{1}{\lambda^{6}} \right)_{\mathbb{F}_q}
\end{split}
\end{equation}

\begin{equation}
\begin{split}
A({}^t(0,0,0,3t,4t,5t))=-qJ(\omega^{t},\omega^{2t},\omega^{3t})\cdot{}_2\widetilde{F}_2\left(
\begin{array}{ccc}
\omega^{2t} & \omega^{t}  \\
\epsilon & \epsilon  \\
\end{array}
;\frac{1}{\lambda^{6}} \right)_{\mathbb{F}_q}
\end{split}
\end{equation}

\begin{equation}
\begin{split}
A({}^t(0,0,0,t,2t,3t))=-qJ(\omega^{3t},\omega^{4t},\omega^{5t})\cdot{}_2\widetilde{F}_2\left(
\begin{array}{ccc}
\omega^{4t} & \omega^{5t}  \\
\epsilon & \epsilon  \\
\end{array}
;\frac{1}{\lambda^{6}} \right)_{\mathbb{F}_q}
\end{split}
\end{equation}

\begin{equation}
\begin{split}
A({}^t(0,0,t,t,2t,2t))=-qJ(\omega^{4t},\omega^{4t},\omega^{5t},\omega^{5t})\cdot{}_3\widetilde{F}_3\left(
\begin{array}{cccc}
\omega^{3t} & \omega^{4t} & \omega^{5t} \\
\epsilon & \omega^t &\omega^{2t} \\
\end{array}
;\frac{1}{\lambda^{6}} \right)_{\mathbb{F}_q}
\end{split}
\end{equation}

\begin{equation}
\begin{split}
A({}^t(0,0,2t,2t,4t,4t))=-q^2\cdot{}_3\widetilde{F}_3\left(
\begin{array}{cccc}
\omega^{3t} & \omega^{5t} & \omega^{t} \\
\epsilon & \omega^{2t} &\omega^{4t} \\
\end{array}
;\frac{1}{\lambda^{6}} \right)_{\mathbb{F}_q}
\end{split}
\end{equation}

\begin{equation}
\begin{split}
A({}^t(0,0,t,3t,4t,4t))=qJ(\omega^{2t},\omega^{2t},\omega^{3t},\omega^{5t})\cdot{}_2\widetilde{F}_2\left(
\begin{array}{ccc}
\omega^{2t} & \omega^{5t}  \\
\epsilon & \omega^{4t}  \\
\end{array}
;\frac{1}{\lambda^{6}} \right)_{\mathbb{F}_q}
\end{split}
\end{equation}

\begin{equation}
\begin{split}
A({}^t(0,0,t,3t,3t,5t))=-q^2\cdot{}_2\widetilde{F}_2\left(
\begin{array}{ccc}
\omega^{2t} & \omega^{4t}  \\
\epsilon & \omega^{3t}  \\
\end{array}
;\frac{1}{\lambda^{6}} \right)_{\mathbb{F}_q}
\end{split}
\end{equation}

\begin{equation}
\begin{split}
A({}^t(0,0,t,2t,4t,5t))=-q^2\omega^t(-1)\cdot{}_1\widetilde{F}_1\left(
\begin{array}{cc}
\omega^{3t}  \\
\epsilon   \\
\end{array}
;\frac{1}{\lambda^{6}} \right)_{\mathbb{F}_q} \label{1F1}
\end{split}
\end{equation}
\normalsize
\end{prop}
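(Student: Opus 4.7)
The plan is to verify all fourteen identities by a uniform case-by-case computation. For each representative $s = {}^t(s_1, \ldots, s_6)$ in the list preceding the proposition, the data required by Corollary \ref{miyatani} is completely explicit: $|s|/6$ is always a multiple of $t$, so the upper row $\omega^{|s|/6 + kt}$ ($k = 0, \ldots, 5$) of the ambient ${}_6\widetilde{F}_6$ is a permutation of $\epsilon, \omega^t, \ldots, \omega^{5t}$, and the lower row is simply $\omega^{s_1}, \ldots, \omega^{s_6}$. My first step for each $s$ is to write both rows down and execute the $\mathrm{Red}$ reduction, which removes every character common to both rows (with multiplicity); the surviving parameters then give the reduced hypergeometric function on the right-hand side by inspection.

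The second step is to simplify $\gamma(s) = -\prod_{i=1}^6 g(\omega^{-s_i})$ using only three standard tools: the identity $g(\epsilon) = -1$, the reflection formula $g(\omega^a)g(\omega^{-a}) = q\omega^a(-1)$ for $\omega^a \neq \epsilon$, and the Gauss--Jacobi relations
\[
J(\chi_1, \chi_2) = \frac{g(\chi_1)g(\chi_2)}{g(\chi_1\chi_2)} \quad (\chi_1\chi_2 \neq \epsilon), \qquad J(\chi_1, \chi_2, \chi_3) = -\frac{g(\chi_1)g(\chi_2)g(\chi_3)}{q} \quad (\chi_1\chi_2\chi_3 = \epsilon),
\]
together with their iterated analogues for four non-trivial characters whose product is trivial. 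After canceling the $g(\epsilon)$ factors and pairing off any $(g(\omega^a), g(\omega^{-a}))$ via the reflection formula, the remaining Gauss sums will assemble into exactly the Jacobi sum appearing in the statement. For example, in $s = {}^t(0,0,0,t,t,4t)$ one obtains $\gamma(s) = g(\omega^{5t})^2 g(\omega^{2t})$, and since $\omega^{2t}\cdot\omega^{5t}\cdot\omega^{5t} = \omega^{12t} = \epsilon$, the second identity above converts this into $-qJ(\omega^{2t}, \omega^{5t}, \omega^{5t})$, matching the claimed coefficient.

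The main obstacle is not conceptual but rather combinatorial bookkeeping. For each of the fourteen representatives I must (i) identify correctly which $\omega^{|s|/6+kt}$ in the upper row matches which $\omega^{s_i}$ in the lower row after the shift, (ii) keep track of signs of the form $\omega^{bt}(-1)$ produced by the reflection formula, using in particular $\omega^{2t}(-1) = 1$ (since $\omega^{2t}$ has odd order $3$) and $\omega^{3t}(-1)\omega^{3t}(-1) = 1$ to collapse ambiguities, and (iii) for the cases whose coefficient involves a four-variable Jacobi sum (namely $s = {}^t(0,0,t,t,2t,2t)$ and $s = {}^t(0,0,t,3t,4t,4t)$), choose the grouping of the surviving Gauss sums so that the resulting Jacobi sum has precisely the parameter list written in the statement rather than a Galois-equivalent variant. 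Once those choices are fixed in each case, every remaining manipulation is mechanical, and running through the fourteen classes tabulated in Subsection \ref{notations} exhausts the proposition.
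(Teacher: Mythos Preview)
Your proposal is correct and matches the paper's approach. In fact, the paper does not supply a detailed proof of this proposition at all: the sentence preceding it simply announces that one ``calculates $A(s):=\gamma(s)F(s)$ \ldots by using'' Corollary~\ref{miyatani}, and then the fourteen identities are listed. Your plan---write out the upper and lower parameter rows from Corollary~\ref{miyatani}, carry out the $\mathrm{Red}$ cancellation, and simplify $\gamma(s)=-\prod_i g(\omega^{-s_i})$ via $g(\epsilon)=-1$, the reflection formula, and the Gauss--Jacobi relation $\prod_i g(\chi_i)=-q\,J(\chi_1,\ldots,\chi_n)$ when the $\chi_i$ are nontrivial with trivial product---is exactly the intended computation, only made explicit.
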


Finally, we rewrite Proposition \ref{miyatanimaccarthy} by using Greene's hypergeometric function. McCarthy gave the relation between his hypergeometric function and Greene's hypergeometric function.

\begin{prop}\cite[Proposition 2.5]{transformation}\label{transformation}
For characters $A_0$, $\ldots$, $A_n$, $B_1$, $\ldots$, $B_n$ with $A_0\neq \epsilon$ and $A_i\neq B_i$ $($$i=1,\ldots,n$$)$, we have
\footnotesize
\begin{equation*}
{}_{n+1}\widetilde{F}_{n+1}
\left(
\begin{array}{ccccc}A_0 &A_1 & \cdots & A_{n} \\
\epsilon & B_1& \cdots& B_{n}  \\
\end{array}
;x \right)_{\mathbb{F}_q}=\left(\prod_{i=1}^{n}\binom{A_i}{B_i}^{-1}\right){}_{n+1}F_n \left(
\begin{array}{cccc|c}
A_0, & A_1, & \ldots & A_n \\
& B_1, & \ldots & B_n
\end{array}
\ x \right)
_q.
\end{equation*}
\end{prop}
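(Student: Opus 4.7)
The plan is to expand both sides in terms of Gauss sums and compare. The whole argument rests on only two classical facts: the very definition
\[
\binom{A}{B} = \frac{B(-1)}{q} J(A, \overline{B})
\]
of the normalized Jacobi sum, and the standard Gauss-sum evaluation $J(A, \overline{B}) = g(A) g(\overline{B}) / g(A \overline{B})$, which holds as long as $A \overline{B} \neq \epsilon$.

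First I would start from Greene's defining sum
\[
{}_{n+1}F_n \left(
\begin{array}{cccc|c}
A_0, & A_1, & \ldots & A_n \\
& B_1, & \ldots & B_n
\end{array}
\ x \right)_q = \frac{q}{q-1}\sum_{\chi} \binom{A_0\chi}{\chi} \prod_{i=1}^n \binom{A_i\chi}{B_i\chi} \chi(x)
\]
and replace each binomial by Gauss sums. The hypotheses $A_0 \neq \epsilon$ and $A_i \neq B_i$ are exactly what ensure that $A_0\chi \cdot \overline{\chi} = A_0$ and $A_i\chi \cdot \overline{B_i\chi} = A_i \overline{B_i}$ are nontrivial for every $\chi$, so the Gauss-sum evaluation applies uniformly in $\chi$ (no exceptional character must be treated separately), producing
\[
\binom{A_0\chi}{\chi} = \frac{\chi(-1)}{q} \cdot \frac{g(A_0\chi) g(\overline{\chi})}{g(A_0)}, \qquad \binom{A_i\chi}{B_i\chi} = \frac{B_i\chi(-1)}{q} \cdot \frac{g(A_i\chi) g(\overline{B_i\chi})}{g(A_i \overline{B_i})}.
\]

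Next I would pull all $\chi$-independent factors outside the sum. The powers of $q$ combine to $q/q^{n+1} = q^{-n}$; the sign factors collect as $\chi(-1)^{n+1} \prod_i B_i(-1)$, with the $\chi$-dependent part remaining inside the sum; and the Gauss sums left in the denominator are $g(A_0)^{-1} \prod_i g(A_i \overline{B_i})^{-1}$. Using $g(\epsilon) = -1$ to account for the $\epsilon$-slot on McCarthy's bottom row, and balancing by the trivial factors $g(A_0)/g(A_0)$, $g(A_i)/g(A_i)$, $g(\overline{B_i})/g(\overline{B_i})$, the residual $\chi$-sum is exactly the defining sum of $_{n+1}\widetilde{F}_{n+1}$ with the stated parameters.

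Finally, the leftover prefactor simplifies as
\[
\prod_{i=1}^n \frac{B_i(-1)\, g(A_i) g(\overline{B_i})}{q \cdot g(A_i \overline{B_i})} = \prod_{i=1}^n \frac{B_i(-1)}{q} J(A_i, \overline{B_i}) = \prod_{i=1}^n \binom{A_i}{B_i},
\]
by running the same two identities backwards, and dividing through yields the claimed formula. No step is genuinely hard; the argument is entirely bookkeeping. The main things to watch are the tally of $q$-powers and of the sign factors $B_i(-1)$, $\chi(-1)^{n+1}$, together with the point that the hypothesis $A_0 \neq \epsilon$, $A_i \neq B_i$ is precisely what lets the Jacobi-to-Gauss conversion apply uniformly, keeping the proof one line per factor.
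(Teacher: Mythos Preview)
Your argument is correct. Expanding each normalized Jacobi sum via $J(\chi,\psi)=g(\chi)g(\psi)/g(\chi\psi)$ (valid because the hypotheses $A_0\neq\epsilon$ and $A_i\neq B_i$ force the product character to be nontrivial, uniformly in $\chi$) and then matching against McCarthy's defining sum, with $g(\epsilon)=-1$ handling the $\epsilon$-slot, gives exactly the stated identity after the bookkeeping you describe.

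There is nothing to compare: the paper does not supply its own proof of this proposition but simply quotes it from McCarthy \cite[Proposition~2.5]{transformation}. Your write-up is thus strictly more than what appears here. One small point worth noting for completeness: as recorded in this paper, the binomial-sum definition of Greene's ${}_{n+1}F_n$ is stated only for $n\geq 2$, while ${}_2F_1$ is defined separately via a character sum over $y\in\mathbb{F}_q$. Your argument, as written, plugs directly into the binomial-sum form and so covers $n\geq 2$; for $n=1$ one must also invoke Greene's theorem that the two expressions for ${}_2F_1$ agree (see \cite[Theorem~3.6]{Greene}), after which the same computation goes through verbatim.
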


Theorem \ref{main theorem} follows from using Propositions \ref{miyatanimaccarthy} and \ref{transformation} and applying the identity 
\begin{equation*}
{}_1F_0\left(
\begin{array}{cc}
\omega_{\alpha}  \\
\epsilon   \\
\end{array}
;x \right)_q=\varepsilon(x)\overline{\omega}_{\alpha}(1-x)
\end{equation*}
(See \cite[(3.11)]{Greene}) for the identity (\ref{1F1}).

\begin{rem}
We can also prove {\cite[Theorem 1.1]{case4}} and {\cite[Theorem 1.2]{cased}} similarly.
\end{rem}

\section*{Acknowledgements}
The author is indebted to Professor Shinichi Kobayashi, his supervisor, for his excellent guidance, patience and constant encouragement. He is grateful to Professor Kazuaki Miyatani  for informing the author about his result and giving essential advice. He would also like to Professor Noriyuki Otsubo, Akio Nakagawa and Hiroki Obama for valuable comments and discussion.

\end{document}